\definecolor{refkey}{gray}{.75}
\newcommand\reallywidehat[1]{%
\savestack{\tmpbox}{\stretchto{%
  \scaleto{%
    \scalerel*[\widthof{\ensuremath{#1}}]{\kern-.6pt\bigwedge\kern-.6pt}%
    {\rule[-\textheight/2]{1ex}{\textheight}}
  }{\textheight}%
}{0.5ex}}%
\stackon[1pt]{#1}{\tmpbox}%
}
\def\p{\partial}
\def\tilde{\widetilde}
\def\hat{\widehat}
\def\bs{\boldsymbol}
\def\nb{\mathbf{n}}
\def\ub{\bs{u}}
\def\kb{\bs{\mathrm{k}}}
\def\tb{\mathbf{t}}
\def\CE{\mathcal{CE}}
\def\VE{\mathcal{VE}}
\def\VC{\mathcal{VC}}
\def\CV{\mathcal{CV}}
\def\EC{\mathcal{EC}}
\def\EV{\mathcal{EV}}
\def\VE{\mathcal{VE}}
\def\BC{\mathcal{BC}}
\def\IC{\mathcal{IC}}
\def\BE{\mathcal{BE}}
\def\IE{\mathcal{IE}}
\def\gradh{\nabla_h}
\def\sgradh{\nabla^\perp_h}
\def\V{\mathcal{V}}
\def\E{\mathcal{E}}
\def\C{\mathcal{C}}
\def\M{\mathcal{M}}
\newcommand{\ra}[1]{\renewcommand{\arraystretch}{#1}}
\numberwithin{theorem}{section}
\newcommand{\TheTitle}{Conservative numerical schemes with optimal
  dispersive wave relations --- Part
  I.~Derivations and Analyses}  
\newcommand{\TheAuthors}{Qingshan Chen, Lili Ju, and Roger Temam}
\newcommand{\ShortTitle}{Energy and enstrophy conservations}
\headers{\ShortTitle}{\TheAuthors}
\title{{\TheTitle}\thanks{Submitted to the editors DATE.
\funding{This work was in part supported by ??}}} 
\author{
 Qingshan Chen\thanks{Corresponding author: Department of Mathematical Sciences, Clemson
   University, Clemson, SC 29631.
   (\email{qsc@clemson.edu}, \url{http://mathfluids.github.io}).},
\and 
 Lili Ju
 \and
  Roger Temam
 }
\begin{document}
\maketitle

\begin{abstract}
An energy-conserving
 and an energy-and-enstrophy conserving numerical schemes  are
derived, by approximating the Hamiltonian formulation, based on the
Poisson brackets and the vorticity-divergence variables, of the
inviscid shallow water flows. The conservation of the energy and/or
enstrophy stems from skew-symmetry of the Poisson brackets, which is
retained in the discrete approximations. These
schemes operate on unstructured orthogonal dual meshes, over bounded
or unbounded domains, and they are also shown to possess the same optimal
dispersive wave relations as those of the Z-grid scheme. 
\end{abstract}

\begin{keyword}
Unstructured meshes, shallow water equations, large-scale geophysical flows,
energy conservation, enstrophy conservation, Hamiltonian structures,
Poisson bracket, dispersive wave
relations
\end{keyword}

\begin{AMS}
35Q86, 65M08, 65P10
\end{AMS}

\section{Introduction}\label{sec:intro}
Long-term weather and climate forecasts remain an challenging
enterprise. There are many factors contributing to this challenge, and
prominently amidst them
is the fact that two major
components of the weather and climate systems, namely the ocean and
atmosphere, are inherently multi-scale, nonlinear, turbulent, and
chaotic. For flows of this nature, accurately reproducing their
point-wise behaviors over long-period simulations is out of
question. But point-wise behaviors are of little interest for
long-term weather and climate forecasts; of far more importance  are
certain aggregated statistics, such as the annual mean precipitation,
likelihood of a drought, etc. How to improve the fidelity of weather
and climate models in predicting these statistics is a wide open
question with no definitive answers in sight yet. But it is not hard
to come up with some obviously necessary conditions for these models
to be successful. While the ocean and atmosphere are
chaotic in nature, they also very predictably possess some
characteristic structures that distinguish them from other physical or
engineering flows, such as the gravity-induced stratification and the
geostrophic balance between the Coriolis force and the horizontal
pressure gradient. Models of these flows, for any chance of success,
must reliably reproduce these characteristic structures, for, without
them, the model lose any relevance to the large-scale geophysical
flows, and can not be trusted to convey any reliable statistics about
them. The ocean and atmosphere are close to being inviscid in the
interior. When the viscosity is ignored, large-scale
geophysical flows conserve many quantities, such as the mass,
vorticity, energy, etc. It is then reasonable to expect that models
of these flows, under the same assumption, should also conserve at
least a select set of these quantities. A model with only half of the
mass or energy left at the end of a simulation is unlikely a good
approximation of the reality.

This work focuses on these two issues of model developments, namely
the preservation of the characteristic structures and the conservation of key
quantities. On the first one, only the preservation of the geostrophic
balance will be pursued here. The issue of
stratification will be studied 
in future works.

The Coriolis force, which is a result of the earth rotation, and the
horizontal pressure gradient are two dominant terms in the dynamical
equations for both the ocean and atmosphere. The two dominant forces
must approximately strike a balance, and this balance is called the
geostrophic balance. Oceanic and atmospheric flows constantly evolve around the
geostrophic balance. The process through which geophysical
flows always manage to keep themselves in the vicinity of a perfect
geostrophic balance is called geostrophic adjustment. It was
recognized long time ago (\cite{Arakawa1977-og}) that a key element of
this process is the dispersion of the internal waves. Thus, the
capability of a model to maintain the geostrophic balance is directly
related to its representation of the dispersive wave relations within
the flows. Again, it was discovered long time ago that the placement
of discrete variables have an impact on the representation of discrete
wave relations. Arakawa and Lamb (\cite{Arakawa1977-og}) studied a
suite of staggering techniques, labeled by single letters from A to E,
and demonstrated through analysis that each of these techniques
provide a different representation of the dispersive relations. Among
these staggering techniques, the so-called C-grid scheme appears to
be the most popular choice, thanks to its decent dispersive wave
relations when the Rossby deformation radius is adequately
resolved. Randall (\cite{Randall1994-vu}) proposed an entirely new
staggering, actually non-staggering technique, called the Z-grid
scheme, and it is based on the
vorticity-divergence formulation of the dynamical equations. It was
shown that the Z-grid scheme possesses the 
optimal representation of the dispersive wave relations among
second-order numerical schemes, regardless of
whether the Rossby deformation radius is resolved or not.

The capability of a numerical model to conserve key quantities is
severely limited by its finite dimensionality. 
An analytical system, with an infinite number of degrees of freedom,
can conserve many, or even infinitely
many quantities. A finite-dimensional discrete system often struggles
to conserve more than a few of these quantities. Energy and enstrophy
are two key quantities for large-scale geophysical flows. In the
absence of 
the diffusion and the external forcing, both energy and enstrophy are
conserved under the analytical system. 
But conserving both quantities in  discrete systems is not a trivial
task. 
By incorporating
conservations as constraints directly into the derivation process,
Arakawa and Lamb (\cite[AL81]{Arakawa1981-dy}) obtained a numerical scheme
that conserve both energy and enstrophy for the shallow water
equations (SWEs) over rectangular and spherical meshes. A
fourth-order-accurate version of the Arakawa and Lamb scheme was
presented in (\cite{Takano1982-dz}). In a series of papers, Salmon
(\cite{Salmon2004-tt, Salmon2005-pd, Salmon2007-dm, Salmon2009-xr})
utilized the Hamiltonian formulation to obtain numerical schemes that
conserve both energy and enstrophy over rectangular meshes. McRae and
Cotter (\cite[MC]{McRae2014-db}) derived an energy and enstrophy
conserving 
scheme using the mixed finite element method.  With the mixed finite
element method and the Hamiltonian formulation, Eldred and Randall
(\cite{Eldred2017-ji}) extended the AL81 scheme onto unstructured
meshes over the global sphere, and also obtained a generalization of
the Z-grid scheme, which conserves both energy and enstrophy. Bauer
and Cotter (\cite{Bauer2018-uv}) extended the MC scheme onto bounded
domain, adding the potential vorticity as a prognostic variable on the
boundary.

We aim to develop a numerical scheme that conserves both energy and
enstrophy, and possesses good
dispersive wave relations. The
scheme should be able to operate over both bonded and unbounded
domains, so that it is applicable to both the ocean and atmosphere. To
achieve these goals, we adopt the vorticity-divergence formulation of
the dynamical equations, in the hope that the scheme will inherit the
optimal dispersive wave relations of the Z-grid scheme of
\cite{Randall1994-vu}). For the conservations, we adopt the
Hamiltonian approach, which has a long history of success in ensuring
the conservation of certain key quantities 
(\cite{Feng1986-va, Wang2001-hv, Gassmann2008-dm}).

In the Hamiltonian approach, the energy conservation is tied with
the skew-symmetry of the Poisson bracket, and the enstrophy
conservation is a consequence of the fact that the quantity is a
singularity to the Poisson bracket. In the discrete approximations, as
it turns out, preserving the skew-symmetry of the Poisson 
bracket, and thus the energy conservation, is straightforward, but
ensuring that the enstrophy remains a singularity to the Poisson
bracket is not. To overcome this difficulty, Salmon
(\cite{Salmon2005-pd}) replaces the bilinear Poisson bracket with the trilinear
Nambu bracket, which includes the potential enstrophy as a third
input parameter, and is skew-symmetric with regard to all of its
parameters. This goes well on unbounded domains, since, in the absence
of boundaries, the Poisson bracket and the Nambu bracket are
equivalent. But they are not over bounded domains. It is discovered
in this study that, while they are not equivalent {\itshape per se},
the vorticity-related components of these brackets are actually
equivalent even over bounded domains, thanks to the homogeneous
Dirichlet boundary conditions commonly prescribed on the
streamfunction. Thus, in departure from the common practice in the
literature, we replace the vorticity-related component of the Poisson
bracket by its Nambu counterpart, and keeps the rest of the Poisson
bracket intact. 

This project uses unstructured meshes, specifically the centroidal
Voronoi tessellations (\cite{Du1999-th, Du2003-gn}) for the model
development. Traditionally, structured meshes dominate this field
(\cite{Bryan1968-dd, Griffies2000-jo, Smith2010-tj}. In recent years,
however, unstructured meshes are becoming popular (\cite{Pain2005-ug,
  Skamarock2012-dt, Ringler2013-pj}), for their capability to maintain
high qualities across the entire domain, even at very high
resolutions, making themselves an appealing option in the age of
exascale computing. Another advantage of unstructured meshes is that
they can provide an accurate representation of the coastal lines in
the case of ocean modeling.

The rest of the article is organized as follows. Section
\ref{sec:analytic} presents a review of the analytic system of the
shallow water equations and its Hamiltonian formulation. Section
\ref{sec:discrete} details the derivation, through the Hamiltonian
approach,  a numerical scheme that conserves energy only, and a numerical
scheme that conserves both the energy and the enstrophy. Section
\ref{sec:linear-analysis} deals with the dispersive wave relations of
the newly derived schemes. Section \ref{sec:conclusion} concludes with
a few remarks.

\section{The continuous system and its Hamiltonian
  formulation}\label{sec:analytic} 
The shallow water equations (SWEs)for large-scale rotating geophysical flows
read as
\begin{equation}
  \label{eq:1}
  \left\{
    \begin{aligned}
      &\dfrac{\p}{\p t}\phi + \nabla\cdot(\phi\ub) = 0,\\
      &\dfrac{\p}{\p t}\ub + \ub\cdot\nabla\ub + f\kb\times\ub =
      -\nabla\left(g(\phi+b)\right),
    \end{aligned}\right.
\end{equation}
where $\phi$ represents the fluid thickness, $\ub$ the horizontal
velocity field of the flow, $b$ the topography, $f$ the Coriolis force
due to the earth rotation, $\kb$ the unit vector pointing in the local
vertical direction, and finally $g$ the gravitational acceleration. 
In geoscience, the so-called vector invariant form of the system is
often preferred for its independence from the choice of the coordinate
system and it given by
\begin{equation}
  \label{eq:2}
  \left\{
    \begin{aligned}
      &\dfrac{\p}{\p t}\phi + \nabla\cdot(\phi\ub) = 0,\\
      &\dfrac{\p}{\p t}\ub + q\kb\times(\phi\ub) =
      -\nabla\left(g(\phi+b) + K\right),
    \end{aligned}\right.
\end{equation}
where $q\equiv (f+\nabla\times\ub)/\phi$ stands for the
potential vorticity, and $K \equiv |\ub|^2/2$ the kinetic energy of
the flow per unit volume.

Taking curl and divergence of the momentum equation of \eqref{eq:2},
one obtains the vorticity-divergence formulation of the SWEs as
\begin{equation}
  \label{eq:3}
  \left\{
    \begin{aligned}
      &\dfrac{\p}{\p t}\phi + \nabla\cdot(\phi\ub) = 0,\\
      &\dfrac{\p}{\p t}\zeta + \nabla\cdot(q\phi\ub) = 0,\\
      &\dfrac{\p}{\p t}\gamma - \nabla\times(q\phi\ub) = 
      -\Delta\left(g(\phi+b) + K\right),
    \end{aligned}\right.
\end{equation}
where $\zeta \equiv \nabla\times\ub$ represents the relative vorticity,
and $\gamma\equiv\nabla\cdot\ub$ the divergence. For a
two-dimensional model, such as the SWEs, both $\zeta$ and $\gamma$ are
scalar quantities, and this is one of the  reasons that  the vorticity-divergence
formulation is preferred on unstructured meshes. 

Equation $\eqref{eq:3}_2$ makes it clear that the relative vorticity
$\zeta$ is not invariant along fluid paths, and thus is not a
tracer-like quantity, in contrast to
two-dimensional incompressible flows. For the SWEs and geophysical
flows in general, that role is played by the potential vorticity $q$
mentioned in the above, for one can easily derive from the first two
equations of \eqref{eq:3} that
\begin{equation}
  \label{eq:4}
   \dfrac{\p}{\p t}q + \ub\cdot \nabla q = 0.
\end{equation}
The potential vorticity (PV) $q$ is a key quantity for large-scale
geophysical flows (\cite{Pedlosky1987-gk}).

We consider the system on a bounded domain $\M$. The flow is assumed
to be inviscid, and obey the no-flux boundary conditions,
\begin{equation}
  \label{eq:5}
  \ub\cdot\nb = 0,\qquad \p\M.
\end{equation}

Under the inviscid SWEs, both the energy and the potential enstrophy
are conserved. For the energy conservation, one multiplies
$\eqref{eq:2}_2$ by $\phi\ub$, $\eqref{eq:2}_1$ by $|\ub|^2/2$, and
adds the resulting equations to obtain
\begin{equation}
  \label{eq:6}
  \dfrac{\p}{\p t}\left(\dfrac{1}{2}\phi|\ub|^2\right) +
  \nabla\cdot\left(\dfrac{1}{2}|\ub|^2\phi\ub\right) =
  -\phi\ub\cdot\nabla\left(g(\phi+b)\right). 
\end{equation}
Integrating this equation over the domain $\M$, and with the no-flux
boundary condition \eqref{eq:5}, one derives the
kinetic energy budget for the shallow water system \eqref{eq:2},
\begin{equation}
  \label{eq:7}
  \dfrac{d}{d t}\int_\M \dfrac{1}{2}\phi|\ub|^2 d{\mathbf x} = 
  -\int_\M \phi\ub\cdot\nabla\left(g(\phi+b)\right) d{\mathbf x}. 
\end{equation}
For the potential energy budget, one starts from $\eqref{eq:2}_1$,
\begin{equation}
  \label{eq:8}
  \dfrac{\p}{\p t}(\phi + b) + \nabla\cdot(\phi\ub) = 0.
\end{equation}
Multiplying both sides by $g(\phi+b)$, and integrating over $\M$, one
obtains
\begin{equation}
  \label{eq:9}
  \dfrac{d}{d t}\int_\M \dfrac{1}{2}g(\phi + b)^2 {\mathbf x} =   \int_\M
  \phi\ub\cdot\nabla\left(g(\phi+b)\right) d{\mathbf x}.  
\end{equation}
The no-flux boundary condition \eqref{eq:5} has been used to rewrite
the right-hand side into the current form. It is apparent that the
terms on the right-hand sides of \eqref{eq:7} and \eqref{eq:9}, being
of the opposite signs, represent the conversion between kinetic and
potential energies within the shallow water system. When added
together, the right-hand sides of these two equations cancel, and one
arrives at
\begin{equation}
  \label{eq:10}
  \dfrac{d}{d t}\int_\M\left(\dfrac{1}{2}\phi|\ub|^2+
    \dfrac{1}{2}g(\phi + b)^2\right) d{\mathbf x} =  0,
\end{equation}
which proves that the total energy of the system, defined as the sum
of the kinetic and potential energies, is conserved. 

The conservation of the potential enstrophy can be established using
the first two equations of \eqref{eq:3} only. Indeed, these two
equations give rise of the transport equation \eqref{eq:4} for the
potential vorticity $q$. Multiplying this equation by $hq$, one has
\begin{equation}
  \label{eq:11}
   \phi\dfrac{\p}{\p t}\left(\dfrac{1}{2}q^2\right) +
   \phi\ub\nabla\cdot\left(\dfrac{1}{2} q^2\right) = 0. 
 \end{equation}
 Multiplying $\eqref{eq:3}_1$ by $q^2/2$, one gets
\begin{equation}
  \label{eq:12}
   \dfrac{1}{2}q^2 \dfrac{\p}{\p t}\phi +
   \dfrac{1}{2} q^2\nabla\cdot(\phi\ub) = 0. 
 \end{equation}
Adding \eqref{eq:11} and \eqref{eq:12}, and integrating the resulting
equation over $\M$, one obtains
\begin{equation}
  \label{eq:13}
  \dfrac{d}{d t}\int_\M \dfrac{1}{2}\phi q^2 d{\mathbf x} = 0, 
\end{equation}
which proves that the potential enstrophy is conserved. In fact,
following the same procedure and still using only the first two
equations of \eqref{eq:3}, one can show that moments of
the PV $q$ of arbitrary orders are conserved, that is,
\begin{equation}
  \label{eq:14}
  \dfrac{d}{d t}\int_\M \dfrac{1}{2}\phi q^k d{\mathbf x} = 0, \qquad
  \forall\,\,k\ge 0. 
\end{equation}

It has been shown in \cite{Salmon1998-eg} that the inviscid shallow
water equations conform to the variational principles of the
Hamiltonian mechanics. Furthermore, the system can also be written as
the Hamiltonian canonical equations, and the energy conservation then
naturally follows from the skew-symmetry of the Poisson
bracket, and the potential enstrophy is only one of the countlessly
many singularities (or Casimirs) of the bracket, and thus is
conserved. Here, we present the Hamiltonian formulation of the SWEs,
which will serve as a tour-guide to the development of the numerical
schemes in the next section.

We define the Hamiltonian for the SWEs as
\begin{equation}
  \label{eq:15}
  H \equiv \int_\M \left(\dfrac{1}{2}\phi|\ub|^2+
    \dfrac{1}{2}g(\phi + b)^2\right) d{\mathbf x}.
\end{equation}
It is not a coincidence that the Hamiltonian is also the total energy
of the shallow water system (c.f.~\eqref{eq:10}). In this work, we use
$\delta$ to denote the variation of a functional. It is then easy to
see that, for the momentum variables,
\begin{equation}
  \label{eq:16}
  \delta H = \int_\M \left(\phi \ub\cdot \delta \ub +
    \left(\dfrac{1}{2}|\ub|^2 + g(\phi + b)\right)\delta\phi\right)
  d{\mathbf x}. 
\end{equation}
Thus, we obtain the following functional derivatives of the
Hamiltonian $H$ with respect to the thickness and the momentum,
\begin{subequations}\label{eq:17}
  \begin{align}
  \dfrac{\delta H}{\delta \phi} &= \dfrac{1}{2}|\ub|^2 + g(\phi+b)
                                  \equiv \Phi,\label{eq:17a}\\
  \dfrac{\delta H}{\delta \ub} &= \phi\ub.\label{eq:17b}
  \end{align}
\end{subequations}
We note that the right-hand side of \eqref{eq:17a}, usually called
the geopotential, also appears in the vector-invariant form
\eqref{eq:2} and the 
vorticity-divergence form \eqref{eq:3} of the shallow water
equations. This term will be denoted as $\Phi$ in the sequel.

Salmon has demonstrated in \cite{Salmon2004-tt, Salmon2005-pd,
  Salmon2007-dm} that it is advantageous to use the mass flux
$\phi\ub$ instead of the velocity $\ub$ itself as diagnostic
variables, because this choice allows one to write the kinetic energy,
which is a cubic polynomial of $\phi$ and $\ub$, in a binary form that
involves $\phi\ub$ and $\ub$,
\begin{equation}
  \label{eq:15a}
  H \equiv \int_\M \left(\dfrac{1}{2}\phi\ub\cdot\ub+
    \dfrac{1}{2}g(\phi + b)^2\right) d{\mathbf x}.
\end{equation}
When treated as a single variable, 
the mass flux $\phi\ub$  has a
Helmholtz decomposition (\cite{Girault1986-sr})
\begin{equation}
  \label{eq:18}
  \phi\ub = \nabla^\perp\psi + \nabla\chi,
\end{equation}
where $\psi$ and $\chi$ are the streamfunction and the velocity potential
respectively, and $\nabla^{\perp} = \kb\times\nabla$. In the classical Helmholtz decomposition, $\psi$ is
already assumed to satisfy the homogeneous Dirichlet boundary
condition (\cite{Girault1986-sr}); to enforce  no-flux boundary
condition \eqref{eq:5} on the flow, it only remains for the normal
derivative of the  velocity
potential $\chi$ to vanish on the boundary, and thus the boundary
conditions on $\psi$ and $\chi$ are
\begin{subequations}\label{eq:19}
  \begin{align}
    \psi &= 0, \qquad {\rm on}\; \p\M,\label{eq:19a}\\
    \dfrac{\p\chi}{\p n} &= 0,  \qquad{\rm on}\; \p\M.\label{eq:19b}
  \end{align}
\end{subequations}
The relation between $(\psi,\,\chi)$ and the vorticity and divergence
$(\zeta,\,\gamma)$ can be easily derived,
\begin{equation}
  \label{eq:24}
  \left\{
    \begin{aligned}
      \nabla\times\left(\phi^{-1}(\nabla^\perp\psi +
        \nabla\chi)\right) &= \zeta,\\
      \nabla\cdot\left(\phi^{-1}(\nabla^\perp\psi +
        \nabla\chi)\right) &= \gamma.
    \end{aligned}\right.
\end{equation}
Under the boundary conditions just given in \eqref{eq:19}and with a strictly positive
thickness field $\phi$, the system \eqref{eq:24} is a {\itshape coupled,
self-adjoint,} and 
{\itshape strictly elliptic} system for $(\psi,\,\chi)$.

In terms of the thickness, streamfunction and velocity potential, the
Hamiltonian \eqref{eq:15} can be expressed as
\begin{equation}
  \label{eq:20}
   H = \int_\M \left(\dfrac{1}{2} \phi^{-1}\left(
      |\nabla^\perp\psi|^2   + |\nabla\chi|^2 + 2\nabla^\perp\psi\cdot\nabla\chi\right)
     + \dfrac{1}{2}g(\phi + b)^2 \right)  d{\mathbf x}. 
 \end{equation}

Substituting the expression \eqref{eq:18} into \eqref{eq:16}, we have
\begin{equation}
  \label{eq:21}
  \delta H = \int_\M \left(\nabla^\perp\psi \cdot \delta \ub +
    \nabla\chi\cdot \delta\ub + 
    \Phi \delta\phi\right)
  d{\mathbf x}. 
\end{equation}
The geopotential $\Phi$ has been defined in \eqref{eq:17a}. 
By integration by parts on the right-hand side,  one can transfer the differential
operators onto the velocity field to produce the vorticity and
divergence variables, and rewrite the equation as
\begin{equation}
  \label{eq:22}
  \delta H = \int_\M \left(-\psi \delta \zeta -
    \chi\delta\gamma + 
    \Phi \delta\phi\right)
  d{\mathbf x}. 
\end{equation}
The process goes through thanks to the boundary conditions
\eqref{eq:5} and \eqref{eq:19}, and the fact that differential
operators ($\nabla^\perp$ and $\nabla$) and the variation operator
$\delta$ are inter-changeable. 

From the relation \eqref{eq:22}, one easily derive the functional
derivatives of $H$ with respect to the thickness $\phi$, vorticity
$\zeta$ and divergence $\gamma$,
\begin{equation}\label{eq:23}
  \dfrac{\delta H}{\delta \phi} = \Phi,\qquad
  \dfrac{\delta H}{\delta \zeta} = -\psi,\qquad
  \dfrac{\delta H}{\delta \gamma} = -\chi.
\end{equation}

With the aid of the Poisson bracket \cite{Salmon2004-tt}, the system \eqref{eq:3} can be
put in the canonical form 
\begin{equation}
  \label{eq:25}
  \dfrac{\p}{\p t} F = \{F,\,H\}. 
\end{equation}
Here, $F$ represents a functional associated with the shallow water
system, and $H$ the Hamiltonian, which is also a functional. 
The Poisson bracket $\{\cdot,\,\cdot\}$ has three
components,
\begin{equation}
  \label{eq:26}
\{F,\,H\} = \{F,\,H\}_{\zeta\zeta} + \{F,\,H\}_{\gamma\gamma} +
\{F,\,H\}_{\phi\zeta\gamma},
\end{equation}
and each component is defined as follows,
\begin{subequations}\label{eq:27}
  \begin{align}
    \{F,\,H\}_{\zeta\zeta} =& \int_\M q J(F_\zeta,\,H_\zeta) d{\mathbf x},\label{eq:27a}\\
    \{F,\,H\}_{\gamma\gamma} =& \int_\M q J(F_\gamma,\,H_\gamma) d{\mathbf x},\label{eq:27b}\\
    \{F,\,H\}_{\phi\zeta\gamma}
     = &\int_\M \left[q(\nabla F_\gamma\cdot\nabla H_\zeta-\nabla
        H_\gamma \cdot\nabla F_\zeta )\right. +\label{eq:27c}\\
         &\qquad\left. (\nabla F_\gamma\cdot\nabla H_\phi-\nabla
       H_\gamma \cdot\nabla F_\phi )\right] d{\mathbf x}.\nonumber
  \end{align}
\end{subequations}
In the above, $F_\zeta$, etc., are  short-hands for the functional
derivatives $\delta F/\delta\zeta$, etc.
The potential vorticity $q$ for the SWEs is given by
\begin{equation}
  \label{eq:27-2}
  q = \dfrac{f+\zeta}{\phi},
\end{equation}
and $J(\cdot,\cdot)$ is the Jacobian operator   defined by
\begin{equation}
  \label{eq:27-1}
  J(a,b) = \nabla^\perp a \cdot\nabla b
\end{equation}
for any two functions $a$ and $b$.
The Jacobian operator is skew-symmetric w.r.t.~its two argument
functions.  

A state function, such as the vorticity $\zeta({\mathbf x},t)$, can be viewed as
a parametric functional, and can be cast into the functional form
using the parametric Kronner delta function, e.g.
\begin{equation*}
  F({\mathbf x},t) \equiv \zeta({\mathbf x},t) = \int_\M \delta({\mathbf x},{\mathbf y})\zeta({\mathbf x},t)d{\mathbf y},
  \qquad\forall\, {\mathbf x}\in\M, \, t > 0. 
\end{equation*}
By setting the functional $F$ to $\phi$, $\zeta$, and $\gamma$ in
\eqref{eq:26}, one recovers the evolution equation $\eqref{eq:3}_1$,
$\eqref{eq:3}_2$, and $\eqref{eq:3}_3$, respectively. Thus, all the
prognostic variables, the fluid thickness $\phi$, vorticity $\zeta$,
and divergence $\gamma$ evolve according to the differential equation
\eqref{eq:25}.

A crucial implication of the Hamiltonian approach is that 
an arbitrary functional of these
state variables, $F=F(\phi,\zeta,\gamma)$ also evolves according to
this equation.
\begin{theorem}\label{thm:evolv-analytic}
  An arbitrary functional $F=F(\phi,\,\zeta,\,\gamma)$ of the state
  variables also evolves according to the equation \eqref{eq:25}.
\end{theorem}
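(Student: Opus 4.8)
The plan is to read Theorem \ref{thm:evolv-analytic} as the standard fact of Hamiltonian mechanics that every observable is transported by the Poisson bracket with the Hamiltonian, and to reduce it to the three evolution laws already verified for the prognostic variables $\phi$, $\zeta$, $\gamma$. First I would apply the functional chain rule in time: since $F=F(\phi,\zeta,\gamma)$ and each of $\phi,\zeta,\gamma$ depends on $t$ through the flow,
\begin{equation*}
  \dfrac{\p}{\p t}F = \int_\M \left( F_\phi\,\dfrac{\p\phi}{\p t} + F_\zeta\,\dfrac{\p\zeta}{\p t} + F_\gamma\,\dfrac{\p\gamma}{\p t}\right)d{\mathbf x},
\end{equation*}
where $F_\phi,F_\zeta,F_\gamma$ denote the functional derivatives of $F$. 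The entire task is then to show that the right-hand side coincides with $\{F,\,H\}$.

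The key structural observation is that, for a fixed Hamiltonian $H$, the bracket $\{F,\,H\}$ defined in \eqref{eq:26}--\eqref{eq:27} is \emph{linear} in the triple $(F_\phi,F_\zeta,F_\gamma)$. I would exploit this by integrating by parts in each of \eqref{eq:27a}--\eqref{eq:27c} so as to move every spatial derivative off $F_\phi,F_\zeta,F_\gamma$ and onto $q$ and the derivatives $H_\phi,H_\zeta,H_\gamma$; this recasts the bracket in the canonical form
\begin{equation*}
  \{F,\,H\} = \int_\M\left(F_\phi\,G_\phi + F_\zeta\,G_\zeta + F_\gamma\,G_\gamma\right)d{\mathbf x}
\end{equation*}
for coefficient functions $G_\phi,G_\zeta,G_\gamma$ assembled from $q$, $H_\zeta=-\psi$, $H_\gamma=-\chi$ and $H_\phi=\Phi$ (cf.~\eqref{eq:23}). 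Comparing the two displays, it suffices to identify $G_\phi = \p_t\phi$, $G_\zeta = \p_t\zeta$, $G_\gamma=\p_t\gamma$.

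These identifications are precisely the content already established for the prognostic variables. Specializing $F$ to the parametric functionals $\phi({\mathbf x}_0)$, $\zeta({\mathbf x}_0)$, $\gamma({\mathbf x}_0)$ makes $(F_\phi,F_\zeta,F_\gamma)$ equal to $(\delta(\cdot,{\mathbf x}_0),0,0)$, $(0,\delta(\cdot,{\mathbf x}_0),0)$ and $(0,0,\delta(\cdot,{\mathbf x}_0))$ respectively; the sifting property of the delta function then reads $G_\phi({\mathbf x}_0)=\{\phi({\mathbf x}_0),H\}$, and similarly for $\zeta$ and $\gamma$. Since we have already checked that setting $F$ to each prognostic variable in \eqref{eq:25} reproduces the corresponding line of \eqref{eq:3}, these brackets are exactly $\p_t\phi$, $\p_t\zeta$, $\p_t\gamma$. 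Substituting back into the chain-rule identity and invoking linearity of the bracket then completes the proof.

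I expect the main obstacle to be the bookkeeping in the integration-by-parts step: the bracket carries the Jacobian $J$ and several mixed gradient pairings, and transferring the derivatives generates boundary integrals that must be shown to vanish. This is where the no-flux condition \eqref{eq:5} together with the streamfunction/velocity-potential boundary conditions \eqref{eq:19} ($\psi=0$ and $\p_n\chi=0$ on $\p\M$) enter, exactly as they did in passing from \eqref{eq:21} to \eqref{eq:22}. A secondary technical point, to be stated as a standing hypothesis rather than belabored, is that $F$ is smooth enough that its functional derivatives are admissible test functions, so that the delta-function specialization above is legitimate in the distributional sense. With these caveats the argument is a direct chain-rule-plus-linearity computation carrying no deeper difficulty.
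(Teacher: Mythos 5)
Your overall architecture --- the chain rule in $t$, the replacement of $\p_t\phi,\p_t\zeta,\p_t\gamma$ by $\{\phi,H\},\{\zeta,H\},\{\gamma,H\}$, and an appeal to linearity of the bracket in $(F_\phi,F_\zeta,F_\gamma)$ --- is the same as the paper's. The difference lies in how the final identity is established. The paper substitutes the explicit forms \eqref{eq:26}--\eqref{eq:27}, in which the state-variable functionals carry delta kernels, and then \emph{switches the order of integration}: the derivatives acting on the kernels pair directly with $F_\phi,F_\zeta,F_\gamma$ and reconstitute $\{F,H\}$, so no integration by parts is performed and no boundary integrals ever appear. You instead integrate by parts first, to put $\{F,H\}$ in the canonical form $\int_\M(F_\phi G_\phi+F_\zeta G_\zeta+F_\gamma G_\gamma)\,d{\mathbf x}$, and this is where your proposal has a genuine gap.

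You assert that the resulting boundary integrals vanish by \eqref{eq:5} and \eqref{eq:19}, ``exactly as in passing from \eqref{eq:21} to \eqref{eq:22}.'' That is true only for the terms weighted by $F_\zeta$ and $F_\phi$: the term $\oint_{\p\M}F_\zeta\, q\,\nabla^\perp H_\zeta\cdot\nb\,ds$ vanishes because $\psi=0$ on $\p\M$ annihilates the tangential derivative $\p\psi/\p\tau$, and the terms $\oint_{\p\M} F_\zeta\, q\,\p H_\gamma/\p n\,ds$ and $\oint_{\p\M} F_\phi\,\p H_\gamma/\p n\,ds$ vanish by \eqref{eq:19b}. But the three terms weighted by $F_\gamma$ survive: $\{F,H\}_{\gamma\gamma}$ produces $-\oint_{\p\M} F_\gamma\, q\,\p\chi/\p\tau\,ds$ (a \emph{tangential} derivative of $\chi$, which \eqref{eq:19b} does not control), while $\{F,H\}_{\phi\zeta\gamma}$ produces $-\oint_{\p\M} F_\gamma\, q\,\p\psi/\p n\,ds$ and $+\oint_{\p\M} F_\gamma\,\p\Phi/\p n\,ds$ (\emph{normal} derivatives of $\psi$ and $\Phi$, which \eqref{eq:19a} does not control). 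Since $\phi\ub\cdot\tb=\p\psi/\p n+\p\chi/\p\tau$ on $\p\M$, their sum is
\begin{equation*}
  \oint_{\p\M}F_\gamma\left(\dfrac{\p\Phi}{\p n}-q\,\phi\ub\cdot\tb\right)ds,
\end{equation*}
and this vanishes only by invoking the identity $q\,\phi\ub\cdot\tb=\p\Phi/\p n$ on $\p\M$, i.e.~the normal component of the momentum equation $\eqref{eq:2}_2$ combined with the persistence of the no-flux condition, $\p_t(\ub\cdot\nb)=0$. That is a dynamical balance, not a kinematic boundary condition, so the step you dismissed as bookkeeping is exactly where the proof as written fails; it can be repaired either by first proving this boundary balance, or by adopting the paper's switch-of-integration route, which bypasses the boundary terms altogether. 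That these terms are not an artifact is confirmed by the discrete analogue: in the proof of Theorem \ref{thm:evolv} the corresponding boundary contributions do \emph{not} vanish, and are instead absorbed by the extra $1/(4|A_i|)$ terms built into the boundary-cell equation \eqref{eq:54a}.
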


To see this, one takes the variation of this quantity,
and divides it by a variation in time $\delta t$ to obtain
\begin{align*}
  &{\delta F} = \int_\M \left( \dfrac{\delta F}{\delta
      \phi}\delta\phi + \dfrac{\delta F}{\delta
      \zeta}\delta{\zeta} + \dfrac{\delta F}{\delta
      \gamma}\delta{ \gamma}\right) d{\mathbf x},\\
  &\dfrac{d F}{d t} = \int_\M \left( \dfrac{\delta F}{\delta
      \phi}\dfrac{\p \phi}{\p t} + \dfrac{\delta F}{\delta
      \zeta}\dfrac{\p \zeta}{\p t} + \dfrac{\delta F}{\delta
      \gamma}\dfrac{\p \gamma}{\p t} \right) dt.
\end{align*}
Each state variable evolves according to equation \eqref{eq:25}. Thus
one can replace the time derivatives on the right-hand side by the
corresponding Poisson brackets,
\begin{equation*}
  \dfrac{d F}{d t} = \int_\M \left( \dfrac{\delta F}{\delta
      \phi}\left\{\phi,H\right\} + \dfrac{\delta F}{\delta
      \zeta}\{\zeta, H\} + \dfrac{\delta F}{\delta
      \gamma}\{\gamma, H\} \right) dt.
\end{equation*}
The next steps in this demonstration is to replace the Poisson brackets
by their explicit forms given in \eqref{eq:26} and \eqref{eq:27},
switch the order of integration, and show that the right-hand side of
the equation above equals $\{F,\,H\}$. Thus the equation \eqref{eq:25}
holds for an arbitrary functional $F$ of the state variables. 

It is clear that the Poisson bracket \eqref{eq:26} is skew-symmetric
with respect to its two arguments, and thus
\begin{equation*}
  \{H,\,H\} = 0,
\end{equation*}
and
\begin{equation}
  \label{eq:28}
  \dfrac{d}{d t} H = 0.
\end{equation}
The Hamiltonian for the shallow water system, which also identifies
with the total energy, is automatically conserved, thanks to the
skew-symmetry of the Poisson bracket.

Let $G(s)$ be an arbitrary function of its scalar argument. 
We now show that a quantity in the form of
\begin{equation}
  \label{eq:29}
  C = \int_\M\phi G(q) d{\mathbf x}
\end{equation}
is a Casimir of the Poisson bracket. We first notice that
\begin{equation}
  \label{eq:30}
  \delta C = \int_M (G(q) - qG'(q))\delta\phi + G'(q)\delta\zeta.  
\end{equation}
Thus, one has
\begin{equation}\label{eq:31}
  \dfrac{\delta C}{\delta \phi} = G(q) - qG'(q),\qquad
  \dfrac{\delta C}{\delta \zeta} = G'(q),\qquad
  \dfrac{\delta C}{\delta \gamma} = 0.
\end{equation}
Let $A$ be an arbitrary functional. After some algebraic
operations, one finds that 
\begin{equation}
  \label{eq:32}
  \{C,\,A\} = \int_\M \nabla C_\phi \cdot\nabla^\perp A_\zeta d{\mathbf x}. 
\end{equation}
As before,  $C_\phi$ and $A_\zeta$ are short-hands for the functional
derivatives. 
We assume that the quantity $A_\zeta$ satisfies the boundary
condition that
\begin{equation}
  \label{eq:33}
  \nabla^\perp A_\zeta \cdot\nb = 0,\qquad\p\M.
\end{equation}
Then we have
\begin{equation*}
  \{C,\,A\} = \int_{\p\M} C_\phi \nabla^\perp A_\zeta \cdot \nb ds =
  0.  
\end{equation*}
We thus have shown that
\begin{equation}
  \{C,\,A\} = 0, \qquad\forall\, A\,\, \textrm{satisfying
  }\eqref{eq:33}.\label{eq:34} 
\end{equation}
The Hamiltonian $H$
clearly satisfies the condition \eqref{eq:33}. As a direct consequence
of \eqref{eq:34}, an infinite number of quantities, in the form  of
$C$ of \eqref{eq:29}, are conserved under the shallow water
system. In particular, the mass (with $G(q)=1$), total circulation
(with $G(q) = q$), and potential enstrophy (with $G(q) = q^2/2$)
of the system are conserved.  

\section{Specification of the schemes}\label{sec:discrete}
The goal is to develop numerical schemes that conserve energy and/or
(potential) enstrophy over arbitrary bounded domains. Our approach
is to use the Hamiltonian formulation.
We use an unstructured and orthogonal dual mesh, in which the dual
cell edges are perpendicular to the associated primal cell edges. A
typical example is the Delaunay-Voronoi dual mesh. Detailed
specifications of the meshes are given in Appendix
\ref{s:mesh}. Specifications of discrete variables and discrete
differential operators on these meshes are listed in Appendix
\ref{s:ddo}. Finally, a discrete vector calculus, including many of
the identities and formulas that will be frequently used in the
sequel, is presented in Appendix \ref{sec:dvc}. 

The focus will be on the
symmetry of the Poisson brackets. Per discussions from the previous
section, (skew-)symmetries will naturally lead to conservative
properties that are desirable for numerical simulations of large-scale
geophysical flows.
The canonical equations for the fluids contain essentially two
ingredients: the Hamiltonian and the Poisson bracket. Thus, a
numerical discretization of the fluid system, based on the Hamiltonian
formulation, naturally comprises two phases: the Hamiltonian
phase approximating the Hamiltonian, and the Poisson phase
approximating the Poisson bracket. It will become apparent later that
these two phases are independent of each other.

\subsection{The Hamiltonian phase}
We enter the Poisson phase first. The results from this phase are
common to all the schemes that will be presented later.

As a vorticity-divergence based system, the prognostic variables are
 thickness $\phi_h$, vorticity $\zeta_h$, and divergence
$\gamma_h$ at the (primary) cell centers.  One will also need the
following diagnostic variables: 
the geopotential $\Phi_h$,
the streamfunction $\psi_h$, and the velocity potential $\chi_h$. The current
work adopts the convention that subscripted variables (by $_h$, $_i$,
etc.) are discrete, while un-subscripted variables are continuous;
un-accented discrete variables are defined at cell centers, while
$\hat{\hphantom{\psi}}$ on the top designates edge-defined variables,
and $\tilde{\hphantom{\psi}}$ designates vertex-defined variables. 

One approximates the Hamiltonian, defined in \eqref{eq:20}, by the
discrete Hamiltonian, still denoted as $H$,
\begin{multline}
  \label{eq:35}
  H = \int_\M \left\{ \hat\phi_h^{-1}\left( \left|\nabla_h^\perp\psi_h\right|^2 +
    \left|\nabla_h\chi_h\right|^2 +
    \nabla_h^\perp\tilde\psi_h\cdot\nabla\chi_h 
    +\nabla_h^\perp\psi_h\cdot\nabla_h\tilde\chi_h\right) \right. \\
 \left.\hphantom{\nabla_h^\perp\tilde\psi_h\cdot\nabla\chi_h}
  +\dfrac{1}{2}g(\phi_h + b_h)^2\right\} d{\mathbf x}. \quad
\end{multline}
The factor of $1/2$ in the kinetic energy in \eqref{eq:20} has
disappeared in the discrete version here due to the fact that only one
component is used in the inner product of the vector fields. One also
notes that the skew-symmetry of the Jacobian term is preserved in the
approximation here, which leads to a symmetric elliptic system later. 

We now derive formulas for the discrete geopotential, vorticity, and
divergence. These formulas will connect the diagnostic variables
$\psi_h$ and $\chi_h$ to the prognostic variables $\phi_h$, $\zeta_h$,
and $\gamma_h$ of the system. We begin by writing down the variation
of $H$ defined in \eqref{eq:35},
\begin{multline*}
  \delta H = \int_\M \left\{ -\hat\phi_h^{-2}\left(\left|\nabla_h^\perp
        \psi_h\right|^2 + \left|\nabla_h\chi_h\right|^2 +
      \nabla_h^\perp\tilde\psi_h\cdot\nabla_h \chi_h +  
    \nabla_h^\perp\psi_h\cdot\nabla_h\tilde\chi_h\right)
      \delta\hat\phi_h  \right.\\
    +\hat\phi^{-1}_h
    \left(2\nabla_h^\perp\psi_h\cdot\delta\nabla^\perp_h\psi_h +
      2\nabla_h\chi_h\cdot\delta\nabla_h\chi_h \right.\\
      +\nabla^\perp_h\tilde\psi_h\cdot\delta\nabla_h\chi_h +
      \nabla_h\chi_h\cdot\delta\nabla^\perp_h \tilde\psi_h + 
     \left. \sgradh\psi_h\cdot\delta\gradh\tilde\chi_h +
      \gradh\tilde\chi_h\cdot\delta\sgradh\psi_h \right)  \\
    \left.\hphantom{\left|\nabla_h^\perp\psi_h\right|^2}
      +g(\phi_h + b_h)\delta\phi_h\right\} d{\mathbf x}.\quad
\end{multline*}
The sign of the first term inside the integral is not right for the
geopotential. Switching the sign of the first term, and
combining the difference with the terms in the middle, one can write 
 $\delta H$ as
\begin{multline*}
  \delta H = \int_\M \left\{ \hat\phi_h^{-2}  \left(\left|\nabla_h^\perp
        \psi_h\right|^2 + \left|\nabla_h\chi_h\right|^2 +
      \nabla^\perp\tilde\psi_h\cdot\nabla\chi_h +  
    \nabla^\perp\psi_h\cdot\nabla_h\tilde\chi_h\right)
  \delta\hat\phi_h  {}\right.
\\+g(\phi_h  + b_h)\delta\phi_h  {} \\
  +2\sgradh\psi_h\cdot\delta\left(\hat\phi_h^{-1}\sgradh\psi_h\right) +
  2\gradh\chi_h\cdot\delta\left(\hat\phi_h^{-1}\gradh\chi_h\right) +
  \sgradh\tilde\psi_h\cdot\delta\left(\hat\phi_h^{-1}\gradh\chi_h\right)
   {}\\
 \left. +\gradh\chi_h\cdot\delta\left(\hat\phi_h^{-1}\sgradh\tilde\psi_h\right) +
 \sgradh\psi_h\cdot\delta\left(\hat\phi_h^{-1}\gradh\tilde\chi_h\right) +
  \gradh\tilde\chi_h\cdot\delta\left(\hat\phi_h^{-1}\sgradh\psi_h\right)
    \right\} d{\mathbf x}
\end{multline*}
With an application of the adjoint identity
\eqref{eq:vc1}, the terms in the first two lines can be combined
together. 
The discrete integration by parts formulas \eqref{eq:vc3},
\eqref{eq:vc4}, \eqref{eq:vc5}, and \eqref{eq:vc6}, and the adjoint
identity \eqref{eq:vc2} are applied to 
the remaining terms, transferring the differential operators and
the remapping operators from the first components of the inner products
to the second components. After regrouping these terms based on their
associations with the streamfunction $\psi_h$ or the velocity
potential $\chi_h$, one obtains
\begin{multline*}
  \delta H = \int_\M \left\{\left(
      \widehat{\hat\phi_h^{-2}\left(\left|\nabla_h^\perp 
            \psi_h\right|^2 + \left|\nabla_h\chi_h\right|^2 +
          \nabla^\perp\tilde\psi_h\cdot\nabla\chi_h +  
    \nabla^\perp\psi_h\cdot\nabla_h\tilde\chi_h\right)} \right.\right. \\
+g(\phi_h  + b_h)\delta\phi_h  {}\bigg)\delta\phi_h \\
{} -
\psi_h\delta\left(\nabla_h\times\left(\hat\phi_h^{-1}\sgradh\psi_h\right)
+
\dfrac{1}{2}\left(\tilde{\gradh\times\left(\hat\phi_h^{-1}\nabla_h\chi_h\right)}
    +
    \nabla_h\times\left(\hat\phi_h^{-1}\nabla_h\tilde\chi_h\right)\right)\right) \\
\left. {} -
\chi_h\delta\left(\nabla_h\cdot\left(\hat\phi_h^{-1}\gradh\chi_h\right)
+
\dfrac{1}{2}\left({\gradh\cdot\left(\hat\phi_h^{-1}\nabla_h^\perp\tilde\psi_h\right)} 
    +
   \tilde{\nabla_h\cdot\left(\hat\phi_h^{-1}\nabla_h^\perp\psi_h\right)}
 \right)\right) 
\right\}d{\mathbf x}
\end{multline*}
We set  
\begin{equation}
  \label{eq:36}\left\{
    \begin{aligned}
\Phi_h &=   \widehat{\hat\phi_h^{-2}\left(\left|\nabla_h^\perp 
            \psi_h\right|^2 + \left|\nabla_h\chi_h\right|^2 +
          \nabla^\perp\tilde\psi_h\cdot\nabla\chi_h +  
    \nabla^\perp\psi_h\cdot\nabla_h\tilde\chi_h\right)} + g(\phi_h+
b_h),\\
 \zeta_h &= \nabla_h\times\left(\hat\phi_h^{-1}\sgradh\psi_h\right)
 +
 \dfrac{1}{2}\left(\tilde{\gradh\times\left(\hat\phi_h^{-1}\nabla_h\chi_h\right)}
     +
     \nabla_h\times\left(\hat\phi_h^{-1}\nabla_h\tilde\chi_h\right)\right),\\
 \gamma_h &= 
 \dfrac{1}{2}\left({\gradh\cdot\left(\hat\phi_h^{-1}\nabla_h^\perp\tilde\psi_h\right)} 
     +
    \tilde{\nabla_h\cdot\left(\hat\phi_h^{-1}\nabla_h^\perp\psi_h\right)}
  \right) + \nabla_h\cdot\left(\hat\phi_h^{-1}\gradh\chi_h\right).
    \end{aligned}\right.
\end{equation}
Then the variation of the discrete Hamiltonian $H$ can be written
as
\begin{equation}
  \label{eq:37}
  \delta H = \int_\M\left(\Phi_h\delta\phi_h - \psi_h\delta\zeta_h -
    \chi_h\delta\gamma_h \right) d{\mathbf x}.
\end{equation}
This relation leads to the discrete analogues of the functional
derivatives given in \eqref{eq:23},
\begin{equation}
  \label{eq:38}
    \dfrac{\delta H}{\delta \phi_h} = \Phi_h,\qquad
  \dfrac{\delta H}{\delta \zeta_h} = -\psi_h,\qquad
  \dfrac{\delta H}{\delta \gamma_h} = -\chi_h.
\end{equation}


When the thickness $\phi_h$, the vorticity $\zeta_h$, and the divergence
$\gamma_h$ are known,  the streamfunction $\psi_h$ and the velocity potential $\chi_h$ can be
recovered from the coupled elliptic system (last two equations of
\eqref{eq:36} under proper boundary 
conditions). On a global sphere, no boundary conditions are needed, but
both the second and last equation of \eqref{eq:36} contains
redundancy, and the solutions are not unique, for any solution plus
some constants will still be a solution to the system. To ensure
uniqueness, one can replace one equation from each set with an
equation that sets $\psi_h$ or $\chi_h$ to a fixed value at a certain
grid point. Here, without loss of generality, we pick cell $i=0$,
and set
\begin{equation}
  \label{eq:36a}\left\{
    \begin{aligned}
&\psi_0 = 0,\\
&\chi_0 = 0.
    \end{aligned}\right.
\end{equation}
On a bounded domain, the discrete streamfunction satisfies the
homogeneous Dirichlet boundary conditions, while the homogeneous
Neumann boundary conditions for the velocity potential $\chi_h$ are
only implicitly enforced through the specification of the divergence
operator, and the velocity potential is set to zero at  cell $0$ to
ensure unique solvability of the system. Specifically, the boundary
conditions for the coupled 
elliptic system on a bounded domains are
\begin{equation}
  \label{eq:36aa}\left\{
    \begin{aligned}
&\psi_i = 0,\qquad i\in\BC,\\
&\chi_0 = 0.
    \end{aligned}\right.
\end{equation}

We note that the relations in \eqref{eq:36} governing the diagnostic
variables $(\psi_h,\,\chi_h)$ and the prognostic variables
$(\zeta_h,\,\gamma_h)$, given a discrete thickness field $\hat\phi_h$,
are solely determined by the choice of the discrete Hamiltonian
$H$. Other choices are certainly possible; but the particularly
choice given in \eqref{eq:35} avoids extra remapping between cell
centers and cell vertices, and therefore are more accurate. The
skew-symmetry-preserving approximation to the Jacobian term in
\eqref{eq:35} also 
ensures that the linear operator on the right-hand side of
$\eqref{eq:36}_{2,3}$ is self-adjoint.  
To see this, we denote the linear operator as $A_h$, given a
strictly positive but arbitrary thickness field $\hat \phi_h$, 
and let $(\psi_h^1,\,\chi_h^1)$ and $(\psi_h^2,\,\chi_h^2)$ be two sets
of diagnostic variables. Then, utilizing the duality identities
\eqref{eq:vc1}, \eqref{eq:vc2} about the remappings and the  discrete
integration by parts formulas \eqref{eq:vc3} and \eqref{eq:vc4}, we
find that 
\begin{align*}
  &\hspace{-0.5cm}\left( A_h \left(\psi^1_h,\,\chi^1_h\right)^T,\,\,
  \left(\psi^2_h,\,\chi^2_h\right)^T\right)\\
  \equiv
  &\left(\nabla_h\times\left(\hat\phi_h^{-1}\sgradh\psi^1_h\right) 
 + \dfrac{1}{2}\left(\tilde{
    \gradh\times\left(\hat\phi_h^{-1}\nabla_h\chi^1_h\right)}   
     + \nabla_h\times\left(\hat\phi_h^{-1}\nabla_h\tilde\chi^1_h
    \right)\right)  ,\, \psi^2_h \right) + \\
   & \left(\dfrac{1}{2}\left(
    {\gradh\cdot\left(\hat\phi_h^{-1}\nabla_h^\perp\tilde\psi^1_h\right)}  
     +
    \tilde{\nabla_h\cdot\left(\hat\phi_h^{-1}\nabla_h^\perp\psi^1_h\right)}
  \right) + \nabla_h\cdot\left(\hat\phi_h^{-1}\gradh\chi^1_h\right), \,\chi_h^2 \right) \\
    = &-2\left(\hat\phi^{-1}_h\sgradh\psi^1_h,\,\sgradh\psi^2_h\right)
        -
        \left(\hat\phi^{-1}_h\gradh\chi^1_h,\,\sgradh\tilde\psi^2_h\right)-
        \left(\hat\phi^{-1}_h\gradh\tilde\chi^1_h,\,\sgradh\psi^2_h\right)
  \\
 & -\left(\hat\phi^{-1}_h\sgradh\tilde\psi^1_h,\,\gradh\chi^2_h\right) -
  \left(\hat\phi^{-1}_h\sgradh\psi^1_h,\,\gradh\tilde\chi^2_h\right) -
  2\left(\hat\phi^{-1}_h\gradh\chi^1_h,\,\gradh\chi^2_h\right) \\
  =& 
\left(\psi^1_h,\, \nabla_h\times\left(\hat\phi_h^{-1}\sgradh\psi^2_h\right) 
 + \dfrac{1}{2}\left(\tilde{
    \gradh\times\left(\hat\phi_h^{-1}\nabla_h\chi^2_h\right)}   
     + \nabla_h\times\left(\hat\phi_h^{-1}\nabla_h\tilde\chi^2_h
    \right)\right)\right) + \\
   & \left(\chi^1_h,\, \dfrac{1}{2}\left(
    {\gradh\cdot\left(\hat\phi_h^{-1}\nabla_h^\perp\tilde\psi^2_h\right)}  
     +
    \tilde{\nabla_h\cdot\left(\hat\phi_h^{-1}\nabla_h^\perp\psi^2_h\right)}
  \right) + \nabla_h\cdot\left(\hat\phi_h^{-1}\gradh\chi^2_h\right)\right) \\
     \equiv&\left(\left(\psi^1_h,\,\chi^1_h\right)^T,\,\,
  A_h \left(\psi^2_h,\,\chi^2_h\right)^T\right).
\end{align*}
This shows that the operator $A_h$ is self-adjoint.
The main disadvantage of this choice of discrete Hamiltonian is that
it is not known under what conditions the linear operator $A_h$ will
be (negative) definite. Under normal circumstances, e.g.~mild
variations in the thickness field, the strictly elliptic terms (second
term in $\eqref{eq:36}_{2,3}$) dominate, and the operator $A_h$ is
indeed negative definite. 
But a precise
characterization of the condition for the (negative) definiteness of
the operator $A_h$ is not known at this point. 

The next two sections deal with the Poisson phase, i.e.~discretization
of the Poisson bracket. Many approximations exist, each with
different properties. We shall present two approximations. The first
approximation preserves the skew-symmetry of the Poisson brackets, and
thus leading to a numerical scheme that attains the energy-conserving
property of the continuous 
system. However, the enstrophy is not conserved within this
scheme. The second approximation leads to a scheme that conserves both
energy and enstrophy.

\subsection{The Poisson phase and an energy conserving
  scheme}\label{sec:poisson1} 
In this subsection we present an approximation of the Poisson bracket
\eqref{eq:26} that is skew-symmetric to its two argument. We then
derive from this approximation an energy-conserving numerical scheme
for the shallow water equations.

The Poisson bracket \eqref{eq:26} is skew-symmetric w.r.t.~$F$ and
$H$, because each of its three components, defined in \eqref{eq:27},
is skew-symmetric. The first two components \eqref{eq:27a} and
\eqref{eq:27b} inherits its skew-symmetry from the Jacobian operator
$J(\cdot,\,\cdot)$. The third component \eqref{eq:27c} is
skew-symmetric due to the inherent permutation in $F$ and $H$ within
the specification. Thus,  the
approximations of $\{F,H\}_{\zeta\zeta}$ and $\{F,H\}_{\gamma\gamma}$
will be skew-symmetric if the approximation of the Jacobian operator
is so, while an approximation of
$\{F,H\}_{\phi\zeta\gamma}$ will be skew-symmetric if its retains the
inherent permutation of the corresponding discrete variables. 

The Jacobian, defined in \eqref{eq:27-1}, is an inner product of a
gradient vector and a skew-gradient vector. A direct approximation of
the inner product, e.g.~by the finite difference method, will
not be  skew-symmetric, because the 
approximation will inevitably involve discrete quantities defined at
different locations on the mesh.  However, we note that, due
to its skew-symmetry, the Jacobian can also be written as
\begin{equation}
  \label{eq:39}
  J(a,b) = \dfrac{1}{2}\nabla^\perp a \cdot\nabla b - \dfrac{1}{2}
  \nabla^\perp b \cdot\nabla a.
\end{equation}
This new expression for the Jacobian contains a permutation of its two
arguments. If this expression is adopted, then no matter how the inner
product of the skew-gradient and gradient vectors are
approximated, the approximation will be skew-symmetric as long as it
retains the permutation of the corresponding discrete variables. 
With $a_h$ and $b_h$ being the discretizations of the analytical
functions $a$ and $b$, respectively, we write down an approximation of
the Jacobian 
of \eqref{eq:39},
\begin{equation}
  \label{eq:40}
  J_h(a_h, b_h) \equiv \nabla_h^\perp \tilde a_h\cdot\nabla_h b_h -
  \nabla^\perp_h\tilde b_h \cdot \nabla_h a_h. 
\end{equation}
Here, $\tilde{\hphantom{a}}$ represents a mapping from cell centers to
cell vertices. Clearly, the discrete Jacobian $J_h(a_h, b_h)$ is
skew-symmetric with respect to its two arguments. This approach is
reminiscent of the discrete approximation of the trilinear form
$b(u,\,v,\,w)$ 
arising in the study of the Navier-Stokes equations (see
\cite{Temam2001-th}). 

With the Jacobian taken care of, the approximation of $\{F,
H\}_{\zeta\zeta}$ and $\{F,H\}_{\gamma\gamma}$ is now
straightforward,
\begin{align}
  &\{F,\,H\}_{\zeta\zeta} \approx \{F, \,H\}_{h,\zeta\zeta} \equiv
    \int_\M \hat q_h J_h\left(\dfrac{\delta F}{\delta \zeta_h},\,\dfrac{\delta
  H}{\delta \zeta_h}\right) d{\mathbf x},\label{eq:41}\\
  &\{F,\,H\}_{\gamma\gamma} \approx \{F, \,H\}_{h,\gamma\gamma}
    \equiv\int_\M \hat 
  q_h J_h\left(\dfrac{\delta F}{\delta \gamma_h},\,\dfrac{\delta
  H}{\delta \gamma_h}\right) d{\mathbf x}, \label{eq:42}
\end{align}
where $\hat q_h$ is the potential vorticity at cell edges, which is
a remapping of the PV $q_h$ at cell centers, and $q_h$ is defined
(see~\eqref{eq:27-2}) as
\begin{equation}
  \label{eq:27-3}
  q_h = \dfrac{f+\zeta_h}{\phi_h}.
\end{equation}

As pointed out above, the definition of $\{F,H\}_{\phi\zeta\gamma}$
already contains a permutation of its two argument, and this
permutation also appears in a straightforward approximation of the
expression by the finite difference method,
\begin{multline}
  \label{eq:43}
  \{F,\,H\}_{\phi\zeta\gamma} \approx \{F,\,H\}_{h,\phi\zeta\gamma}
  \equiv \\
  2\int_\M \hat q_h \left( \nabla_h
        F_{\gamma_h}\cdot\nabla_h H_{\zeta_h} - \nabla_h
        H_{\gamma_h}\cdot \nabla_h
        F_{\zeta_h}\right) d{\mathbf x} \\
    + 2\int_\M
    \left( \nabla_h
        F_{\gamma_h}\cdot\nabla_h H_{\phi_h} - \nabla_h
        H_{\gamma_h}\cdot \nabla_h
        F_{\phi_h} \right)
  d{\mathbf x}.
\end{multline}

The discrete Poisson bracket is the sum of the three expression given
above,
\begin{equation}
  \label{eq:44}
  \{F,\, H\}_{h} \equiv   \{F,\,
  H\}_{h,\phi\zeta\zeta} +   \{F,\,
  H\}_{h,\phi\gamma\gamma} +   \{F,\,
  H\}_{h,\phi\zeta\gamma}.
\end{equation}
We note that this discrete Poisson bracket is skew-symmetric,
and independent of the choice of the Hamiltonian $H$. A discrete scheme
based on this Poisson bracket is given by
\begin{equation}
  \label{eq:44a}
  \dfrac{\p F}{\p t} = \left\{F, \, H\right\}_h.
\end{equation}

We now derive the evolution equation for the discrete prognostic
variables $\phi_h$, $\zeta_h$, and $\gamma_h$. The
approximate Hamiltonian is given in \eqref{eq:35}, and its functional
derivatives w.r.t.~$\phi_h$, $\zeta_h$, and $\gamma_h$ have been given
in \eqref{eq:38}. These expressions will be used in the discrete
Poisson bracket. 

First, for an arbitrary cell index $i$, we take $F = \zeta_i$. $F$ can
be  
written as a functional of $\zeta_h$ with the help of a special kernel
function $\delta_{ih}$ as follows,
\begin{equation}
  \label{eq:45}
  F = \int_\M \delta_{ih} \zeta_h d{\mathbf x},
\end{equation}
where $\delta_{ih}$ is the discrete Kronecker delta function centered
on cell $i$, i.e.
\begin{equation}
  \label{eq:46}
  \delta_{ih} = \sum_j \delta_{ij}\chi_j,\qquad \delta_{ij} = \left\{
    \begin{array}[h]{cl}
      0 & \textrm{if } j \ne i,\\[3pt]
      \dfrac{1}{|A_i|} & \textrm{if } j=i.
    \end{array}\right.
\end{equation}
Thus, $F$ as a functional of $\zeta_h$ has derivatives
\begin{equation}
  \label{eq:47}
    \dfrac{\delta F}{\delta \phi_h} = 0,\qquad
  \dfrac{\delta F}{\delta \zeta_h} = \delta_{ih},\qquad
  \dfrac{\delta F}{\delta \gamma_h} = 0.
\end{equation}
Substituting the discrete functional derivatives, given in
\eqref{eq:38} and \eqref{eq:47}, into the sub-bracket $\{\zeta_i, 
H\}_{h,\zeta\zeta}$, one finds that
\begin{align*}
  \{\zeta_i,\,H\}_{h,\zeta\zeta}
  &= \int_\M \hat q_h
    \left(-\nabla^\perp_h\tilde{\delta_{ih}}\cdot
    \nabla_h\psi_h +
\nabla^\perp_h\tilde{\psi_{h}}\cdot
  \nabla_h\delta_{ih}\right) d{\mathbf x}\\
  &= \dfrac{1}{2}\int_\M \tilde\delta_{ih}\nabla_h\times\left(\hat q_h
    \nabla_h\psi_h\right)d{\mathbf x} - \dfrac{1}{2}\int_\M \delta_{ih}
    \nabla_h\cdot \left(\hat q_h 
    \nabla^\perp_h\tilde\psi_h\right)d{\mathbf x}.
\end{align*}
Both integration by parts in the above derivations go through for
arbitrary cell index $i$, thanks to the homogeneous boundary
conditions on $\psi_h$ and $\tilde\psi_h$. Applying the adjoint
identities \eqref{eq:vc2} to the above, we obtain
\begin{equation}\label{eq:48}
  \{\zeta_i,\,H\}_{h,\zeta\zeta}
  = \dfrac{1}{2}\left[\tilde{\nabla_h\times\left(\hat q_h
    \nabla_h\psi_h\right)}\right]_i - \dfrac{1}{2}\left[
    \nabla_h\cdot \left(\hat q_h 
    \nabla^\perp_h\tilde\psi_h\right) \right]_i.
\end{equation}
Since $F$ is independent of $\gamma_h$, the second component of the
Poisson bracket vanishes for this particular quantity, i.e.
\begin{equation}
  \label{eq:49}
  \{\zeta_i,\,H\}_{h,\gamma\gamma} = 0.
\end{equation}
Finally, substituting the discrete functional derivatives
\eqref{eq:38} and \eqref{eq:47} into the third component of the
Poisson bracket, we have
\begin{align*}
  \{\zeta_i, \, H\}_{h,\phi\zeta\gamma}
  &= 2\int_\M \hat q_h \nabla_h \delta_{ih}\cdot
   \nabla_h \chi_h d{\mathbf x} \\
  &= -\int_\M \delta_{ih}\nabla_h\cdot\left(\hat
    q_h\nabla_h\chi_h\right)d{\mathbf x}.
\end{align*}
The integration-by-parts formula \eqref{eq:vc3} has been used.
The specification of the discrete Kronecker delta function
$\delta_{ih}$ allows us to write this component as
\begin{equation}
  \label{eq:50}
  \{\zeta_i,\,H\}_{h,\phi\zeta\gamma} = -\left[\nabla_h\cdot\left(\hat
    q_h\nabla_h\chi_h\right)\right]_i.
\end{equation}

For $F=\zeta_i$, the Poisson bracket $\{F,H\}_h$ is the sum of the
expressions \eqref{eq:48}--\eqref{eq:50}, and the semi-discrete
equation for $\zeta_i$ takes the form
\begin{equation}
  \label{eq:51}
  \dfrac{d}{dt}\zeta_i =
  \dfrac{1}{2}\left[\tilde{\nabla_h\times\left(\hat q_h 
    \nabla_h\psi_h\right)}\right]_i - \dfrac{1}{2}\left[
    \nabla_h\cdot \left(\hat q_h 
    \nabla^\perp_h\tilde\psi_h\right) \right]_i -
\left[\nabla_h\cdot\left(\hat 
    q_h\nabla_h\chi_h\right)\right]_i.
\end{equation}

The derivation of the evolution equation for the divergence $\gamma_i$
is made more complex by the non-vanishing boundary terms. A guiding
principle in handling the boundary terms is that the skew-symmetry of
the Poisson brackets should be preserved. We let
$F=\gamma_i$, for an arbitrary cell index $i\in\C$. Its functional
derivatives are found to be 
\begin{equation}
  \label{eq:52}
    \dfrac{\delta F}{\delta \phi_h} = 0,\qquad
  \dfrac{\delta F}{\delta \zeta_h} = 0,\qquad
  \dfrac{\delta F}{\delta \gamma_h} = \delta_{ih}.
\end{equation}

Since $F$ here is independent of the state variables $\zeta_h$, the
first component of the Poisson bracket vanishes, i.e.
\begin{equation}
  \{\gamma_i,\, H\}_{h,\zeta\zeta}
  = 0.\label{eq:53a}  
\end{equation}

Substituting the discrete functional derivative $H_{\gamma_h}$
and the discrete functional derivative  $F_{\gamma_h}= \delta_{ih}$
into the second components of the Poisson bracket, and
obtain
\begin{equation}\label{eq:53b}
  \{\gamma_i,\,H\}_{h,\gamma\gamma} =
  -\int_\M \hat q_h\nabla^\perp_h\tilde\delta_{ih} \cdot
  \nabla_h\chi_h d{\mathbf x} + \int_\M \hat
    q_h\nabla^\perp_h\tilde\chi_h\cdot\nabla_h \delta_{ih} d{\mathbf x}.   
\end{equation}
If $i\in\IC$, i.e.~$i$ refers to an interior cell, then
$\tilde\delta_{ih}$ vanishes on the boundary. We apply the integration-by-parts
formula \eqref{eq:vc4}  to the
first inner product above to obtain
\begin{equation*}
\int_\M \hat q_h\nabla^\perp_h\tilde\delta_{ih} \cdot
  \nabla_h\chi_h d{\mathbf x}  = -\dfrac{1}{2}
  \int_\M \tilde\delta_{ih} \nabla_h\times(\hat 
    q_h\nabla_h\chi_h)d{\mathbf x}. 
\end{equation*}
Applying the  adjoint identity \eqref{eq:vc2}  to the right-hand side,
and use the definition for the discrete delta function $\delta_{ih}$, 
we derive that
\begin{equation}\label{eq:53b3}
\int_\M \hat q_h\nabla^\perp_h\tilde\delta_{ih} \cdot
  \nabla_h\chi_h d{\mathbf x}
  = -\dfrac{1}{2}\left[\tilde{\nabla_h\times(\hat 
    q_h\nabla_h\chi_h)}\right]_i. 
\end{equation}

\begin{figure}[h]
  \centering
  \includegraphics[width=3in]{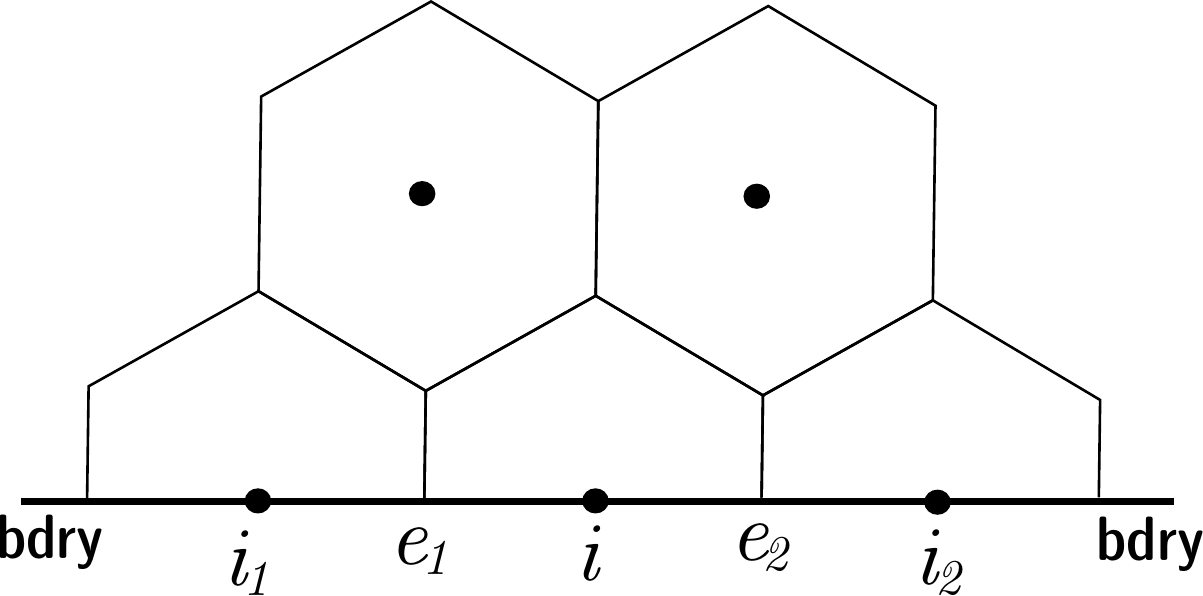}
  \caption{Diagram on boundary cells.}
  \label{fig:bdry}
\end{figure}

We now consider the boundary case when  $i\in\BC$. We let $i_1$ and
$i_2$ be two neighboring boundary cells, with $i_1,\,i,\,i_2$ ordered
in the counter clockwise direction. We designate the two boundary
edges of cell $i$ as $e_1$ and $e_2$, which are again ordered in the
counter clockwise direction (see Figure \ref{fig:bdry}).
Applying the discrete
integration by parts formula \eqref{eq:vc4} to the first inner product
on the 
right-hand side 
of \eqref{eq:53b} and using the fact that $\hat\delta_{ih}$ vanishes
everywhere on the boundary except on edges $e_1$ and $e_2$ around cell
$i$, we have
\begin{multline}\label{eq:53b6}
  \int_\M \hat q_h \nabla^\perp_h\tilde\delta_{ih}\cdot\nabla_h\chi_h d{\mathbf x}
  = -\dfrac{1}{2}
\int_\M \tilde\delta_{ih}\nabla_h\times(\hat 
    q_h\nabla_h\chi_h)d{\mathbf x} \\
  - \dfrac{1}{2}\left(\hat\delta_{i,e_1}
\hat q_{e_1} [\nabla_h\chi_h]_{e_1} d_{e_1}\sum_{\nu\in VE(e_1)}
t_{e_1,\nu} + \hat\delta_{i,e_1}
\hat q_{e_1} [\nabla_h\chi_h]_{e_1} d_{e_1}\sum_{\nu\in VE(e_1)}
t_{e_1,\nu} \right).
\end{multline}
We approximate the values of $\tilde\delta_{ih}$  at edge $e_1$ and
$e_2$ by 
\begin{equation}\label{eq:53b7}
  \hat\delta_{i,e_1} = \hat\delta_{i,e_2} \approx
  \dfrac{1}{2}\delta_{i,i} = \dfrac{1}{2|A_i|}.
\end{equation}
Here $|A_i|$ refers to the area of cell $i$.
We also note that, based on the assumed counter-clockwise orientation
of $\{e_1,\,e_2\}$ and $\{i_1,\,i,\,i_2\}$, we have
\begin{equation}
  \label{eq:53b8}
  \sum_{\nu\in VE(e_1)} t_{e_1,\nu} = n_{e_1,i},\qquad 
  \sum_{\nu\in VE(e_2)} t_{e_2,\nu} = -n_{e_2,i}.
\end{equation}
Substituting \eqref{eq:53b7} and \eqref{eq:53b8}, as well as the
specification \eqref{eq:gradc} for $[\nabla_h\chi_h]_{e_1}$ and
$[\nabla_h\chi_h]_{e_2}$ into \eqref{eq:53b6}, we obtain, for a
boundary cell $i$,
\begin{multline}\label{eq:53b1}
  \int_\M  \hat
  q_h \nabla^\perp_h\tilde\delta_{ih}\cdot\nabla_h\chi_h d{\mathbf x}
  = -\dfrac{1}{2}
  \left[\tilde{\nabla_h\times(\hat 
      q_h\nabla_h\chi_h)}\right]_i \\
+ \dfrac{1}{4|A_i|}\left(\hat q_{e_1} (\chi_i -\chi_{i_1}) + \hat
  q_{e_2} (\chi_{i_2} - \chi_i)\right). \qquad\quad
\end{multline}

For the second integral on the right-hand side of \eqref{eq:53b},
we apply the integration by parts formula \eqref{eq:vc3} directly,
\begin{equation}\label{eq:53b2}
\int_\M \hat
  q_h\nabla^\perp_h\tilde\chi_h\cdot\nabla_h \delta_{ih} d{\mathbf x}
  = -\dfrac{1}{2}\int_\M \nabla_h\cdot\left(\hat
  q_h\nabla^\perp_h\tilde\chi_h\right)\delta_{ih} d{\mathbf x} =
  -\dfrac{1}{2}\left[\nabla_h\cdot\left(\hat
  q_h\nabla^\perp_h\tilde\chi_h\right)\right]_i.
\end{equation}

Combining \eqref{eq:53b}-\eqref{eq:53b2}, we conclude that
\begin{equation}
  \label{eq:53b4}
  \left\{\gamma_i, \, H\right\}_{h,\gamma\gamma} = \left\{
    \begin{aligned}
      & \dfrac{1}{2}\left[\tilde{\nabla_h\times(\hat 
    q_h\nabla_h\chi_h)}\right]_i - \dfrac{1}{2}\left[\nabla_h\cdot\left(\hat
  q_h\nabla^\perp_h\tilde\chi_h\right)\right]_i, & &\textrm{if }i\in IC,\\
    & \langle\textrm{same from above}\rangle {}-{} & &\\
     &\hphantom{2345} \dfrac{1}{4|A_i|}\left(\hat q_{e_1} (\chi_i -\chi_{i_1}) + \hat
  q_{e_2} (\chi_{i_2} - \chi_i)\right), & &\textrm{if } i\in\BC.
    \end{aligned}\right.
\end{equation}

Substituting the discrete functional derivatives \eqref{eq:38} of $H$
and the discrete functional derivatives \eqref{eq:52} of $\gamma_i$
into the third component of the Poisson bracket \eqref{eq:44}, one
obtains 
\begin{equation}\label{eq:53b5}
\{\gamma_i,\, H\}_{h,\phi\zeta\gamma}
  = -2\int_\M \hat q_h \nabla_h\delta_{ih}\cdot\nabla_h\psi_h d{\mathbf x} +
  2\int_\M \nabla_h\delta_{ih}\cdot\nabla_h\Phi_h d{\mathbf x}.
\end{equation}
Applying the integration by parts formula \eqref{eq:vc3} to both inner
products in the above, one reaches at
\begin{equation}
\{\gamma_i,\, H\}_{h,\phi\zeta\gamma}
  = \left[\nabla_h\cdot\left(\hat q_h \nabla_h\psi_h\right)\right]_i
    - \Delta_h \Phi_h. \label{eq:53c}
\end{equation}

The Poisson bracket for $F=\gamma_i$ is a sum of all three expressions
in \eqref{eq:53a}, \eqref{eq:53b4}, and \eqref{eq:53c}. For interior
cells ($i\in\IC$),  the semi-discrete evolution
equation for this quantity has the form
\begin{equation}
  \label{eq:54}
  \dfrac{d}{dt}\gamma_i = \left[\nabla_h\cdot\left(\hat 
    q_h\nabla_h\psi_h\right)\right]_i + 
  \dfrac{1}{2}\left[\tilde{\nabla_h\times\left(\hat q_h 
    \nabla_h\chi_h\right)}\right]_i - \dfrac{1}{2}\left[
    \nabla_h\cdot \left(\hat q_h 
    \nabla^\perp_h\tilde\chi_h\right) \right]_i - \left[\Delta_h
  \Phi_h\right]_i. 
\end{equation}
For boundary cells ($i\in\BC$),
\begin{multline}
  \label{eq:54a}
  \dfrac{d}{dt}\gamma_i = \left[\nabla_h\cdot\left(\hat 
    q_h\nabla_h\psi_h\right)\right]_i + 
  \dfrac{1}{2}\left[\tilde{\nabla_h\times\left(\hat q_h 
    \nabla_h\chi_h\right)}\right]_i - \dfrac{1}{2}\left[
    \nabla_h\cdot \left(\hat q_h 
    \nabla^\perp_h\tilde\chi_h\right) \right]_i \\
 - \left[\Delta_h
  \Phi_h\right]_i - \dfrac{1}{4|A_i|}\left(\hat q_{e_1} (\chi_i
  -\chi_{i_1}) + \hat  q_{e_2} (\chi_{i_2} - \chi_i)\right).  
\end{multline}

Finally, by taking $F=\phi_i$,  we have
\begin{equation}
  \label{eq:55}
    \dfrac{\delta F}{\delta \phi_h} = \delta_{ih},\qquad 
  \dfrac{\delta F}{\delta \zeta_h} = 0,\qquad
  \dfrac{\delta F}{\delta \gamma_h} = 0.
\end{equation}
The three components of the Poisson bracket for this quantity  can
be calculated as before,
\begin{align}
  &\{\phi_i,\, H\}_{h,\zeta\zeta}
  = 0,\label{eq:56a}\\
  &\{\phi_i,\, H\}_{h,\gamma\gamma}
  = 0,\label{eq:56b}\\ 
  &\{\phi_i,\, H\}_{h,\phi\zeta\gamma}
  =  - \Delta_h \chi_h. \label{eq:56c}
\end{align}

The Poisson bracket for $F=\phi_i$ is the sum of these expression, and
the semi-discrete evolution equation for it can be simply written as
\begin{equation}
  \label{eq:57}
  \dfrac{d}{dt} \phi_i = -\left[\Delta_h \chi_h\right]_i.
\end{equation}

For  reference, we summarize the full energy-conserving
semi-discrete scheme for the shallow water equations as follows,
\begin{equation}
  \label{eq:58}
  \left\{
    \begin{aligned}
      \dfrac{d}{dt} \phi_i =
      & -\left[\Delta_h \chi_h\right]_i,\\
      \dfrac{d}{dt}\zeta_i   =
  &\dfrac{1}{2}\left[\tilde{\nabla_h\times\left(\hat q_h 
    \nabla_h\psi_h\right)}\right]_i - \dfrac{1}{2}\left[
    \nabla_h\cdot \left(\hat q_h 
    \nabla^\perp_h\tilde\psi_h\right) \right]_i -
\left[\nabla_h\cdot\left(\hat 
    q_h\nabla_h\chi_h\right)\right]_i,\\
\dfrac{d}{dt}\gamma_i =
  &\dfrac{1}{2}\left[\tilde{\nabla_h\times\left(\hat q_h 
    \nabla_h\chi_h\right)}\right]_i - \dfrac{1}{2}\left[
    \nabla_h\cdot \left(\hat q_h 
    \nabla^\perp_h\tilde\chi_h\right) \right]_i +
\left[\nabla_h\cdot\left(\hat q_h\nabla_h\psi_h\right)\right]_i\\
&- \left[\Delta_h
  \Phi_h\right]_i - \dfrac{1}{4|A_i|}\left(\hat q_{e_1} (\chi_i
  -\chi_{i_1}) + \hat q_{e_2} (\chi_{i_2} - \chi_i)\right).
    \end{aligned}\right.
\end{equation}
The terms preceded by $1/4|A_i|$ in the equation for $\gamma_i$ only
appear for boundary cells ($i\in\BC$). 

A discrete analogue of Theorem \ref{thm:evolv-analytic} can also be
established. 
\begin{theorem}\label{thm:evolv}
  An arbitrary functional $F=F(\phi_h,\,\zeta_h,\,\gamma_h)$  of
  the discrete state variables  evolves according to the equation
  \eqref{eq:44a}. 
\end{theorem}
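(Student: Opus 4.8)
The plan is to transcribe the demonstration of Theorem~\ref{thm:evolv-analytic} into the discrete setting, with the discrete vector calculus identities \eqref{eq:vc3}--\eqref{eq:vc6} playing the role of integration by parts. Because $F$ depends on time only through the prognostic fields, the discrete chain rule gives
\begin{equation*}
  \frac{dF}{dt} = \int_\M\left(\frac{\delta F}{\delta\phi_h}\frac{\p\phi_h}{\p t} + \frac{\delta F}{\delta\zeta_h}\frac{\p\zeta_h}{\p t} + \frac{\delta F}{\delta\gamma_h}\frac{\p\gamma_h}{\p t}\right)d\mathbf{x},
\end{equation*}
where the integral is understood as the cell-weighted sum $\sum_i|A_i|(\cdots)_i$ induced by the discrete inner product. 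Each prognostic variable satisfies $\p_t(\,\cdot\,)_i = \{(\,\cdot\,)_i,H\}_h$ --- this is precisely how the semi-discrete system \eqref{eq:58} was obtained --- so I would substitute \eqref{eq:57}, \eqref{eq:51}, and \eqref{eq:54}--\eqref{eq:54a} for the three time derivatives above.

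The heart of the proof is to recollapse the resulting three sums into the single bracket $\{F,H\}_h$. The clean way to see this is to note that the discrete bracket \eqref{eq:44}, as defined through \eqref{eq:41}--\eqref{eq:43}, is \emph{linear} in the functional-derivative triple $(F_{\phi_h},F_{\zeta_h},F_{\gamma_h})$ of its first argument, and that the discrete Kronecker delta reproduces cell fields: from \eqref{eq:46} one has $\delta_{ih}=\chi_i/|A_i|$, whence $\sum_i|A_i|\,g_i\,\delta_{ih}=g_h$ for any cell-centered field $g_h$. Since $\{\phi_i,H\}_h$, $\{\zeta_i,H\}_h$, and $\{\gamma_i,H\}_h$ are exactly the bracket evaluated at the first-argument triples $(\delta_{ih},0,0)$, $(0,\delta_{ih},0)$, and $(0,0,\delta_{ih})$, linearity together with the reproducing property lets me draw the weights $\sum_i|A_i|F_{\phi_i}$, $\sum_i|A_i|F_{\zeta_i}$, $\sum_i|A_i|F_{\gamma_i}$ back inside the bracket, returning the genuine functional derivatives of $F$ into the first slot and reassembling $\{F,H\}_h$. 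Concretely, this is the discrete analogue of the ``switch the order of integration'' step of Theorem~\ref{thm:evolv-analytic}: one expands each computed bracket, interchanges the cell summation with the integration defining \eqref{eq:41}--\eqref{eq:43}, and identifies the outcome with the direct expansion of $\{F,H\}_h$.

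The step demanding the most care is the boundary. For an interior cell every summation by parts \eqref{eq:vc3}--\eqref{eq:vc6} is exact with no surviving boundary contribution --- for the $\zeta$ terms this is guaranteed by the homogeneous Dirichlet condition on $\psi_h$ and $\tilde\psi_h$ --- and the interior terms reassemble by the linearity argument above. For $i\in\BC$, however, the velocity-potential part of $\{\gamma_i,H\}_{h,\gamma\gamma}$ picks up the explicit correction $-\tfrac{1}{4|A_i|}\big(\hat q_{e_1}(\chi_i-\chi_{i_1})+\hat q_{e_2}(\chi_{i_2}-\chi_i)\big)$ recorded in \eqref{eq:54a}, which originates in \eqref{eq:53b6} from the boundary edges $e_1,e_2$ where $\hat\delta_{ih}$ does not vanish, together with the approximations \eqref{eq:53b7}--\eqref{eq:53b8}. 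The obstacle is to verify that, once weighted by $F_{\gamma_i}$ and summed over $\BC$, these boundary pieces recombine into exactly the boundary contribution produced by expanding $\{F,H\}_{h,\gamma\gamma}$ directly with $F_{\gamma_h}$ in place of $\delta_{ih}$. This is ensured in principle because the defining form \eqref{eq:44} carries no boundary terms whatsoever, so any boundary term created while deriving \eqref{eq:58} from exact identities must reappear consistently upon reassembly; in practice it rests on the skew-symmetric placement of \eqref{eq:53b7} and the orientation identities \eqref{eq:53b8}, which were engineered for precisely this purpose. I would therefore carry out the boundary bookkeeping for the $\{\gamma_i,H\}_{h,\gamma\gamma}$ term explicitly, confirm the match, and invoke the linearity/reproducing-kernel argument for the remaining interior contributions.
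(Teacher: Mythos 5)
Your proposal is correct, and its skeleton --- the discrete chain rule, substitution of the brackets for the time derivatives, and reassembly of the cell-indexed sums into $\{F,H\}_h$ --- is exactly the skeleton of the paper's proof (its equations \eqref{eq:59}--\eqref{eq:74}). The genuine difference lies in how the reassembly is justified. The paper proceeds concretely: for each term it expands the bracket, integrates by parts to free $\delta_{ih}$ of derivatives, switches the summation with the integration (your ``reproducing'' step), and integrates by parts back; for the $\{\gamma_h,H\}_{h,\gamma\gamma}$ term this forces it to carry the boundary corrections of \eqref{eq:53b1} through \eqref{eq:66}--\eqref{eq:69} and to verify that they cancel exactly against the boundary terms generated by the reverse application of \eqref{eq:vc4}. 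Your linearity-plus-reproducing-kernel argument is cleaner and, if you commit to it fully, stronger: since every discrete operator entering \eqref{eq:41}--\eqref{eq:43} ($\nabla_h$, $\nabla_h^\perp$, the tilde and hat remappings, including their boundary-edge specifications \eqref{eq:gradt} and \eqref{eq:sgt}) is linear, applying linearity directly to the \emph{defining} brackets $\{\phi_i,H\}_h$, $\{\zeta_i,H\}_h$, $\{\gamma_i,H\}_h$ collapses the weighted sums to $\{F,H\}_h$ in one stroke, and no boundary term ever appears --- the paper's most delicate passage becomes unnecessary. The one blemish in your write-up is that you partially re-import that bookkeeping by insisting on substituting the post-integration-by-parts forms \eqref{eq:51}, \eqref{eq:54}--\eqref{eq:54a}, \eqref{eq:57}: the linearity argument applies to the brackets themselves, not to those rearranged expressions, so you should substitute $\{\phi_i,H\}_h$, $\{\zeta_i,H\}_h$, $\{\gamma_i,H\}_h$ in bracket form (they are equal by construction, so nothing is lost, but mixing the two representations is what creates the boundary obligation you then have to discharge). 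What the paper's longer route buys in exchange is a consistency check: it reconciles the explicit scheme \eqref{eq:58}, boundary corrections included, with the abstract bracket evolution \eqref{eq:44a}, which your shortcut simply takes as the hypothesis.
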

The proof of this theorem relies on the specific form of the discrete
Poisson bracket \eqref{eq:44}, and unlike for the analytical case,
the boundary terms will show up during the process of integration by
parts. 

\begin{proof}[Proof of Theorem \ref{thm:evolv}]
We let $F=F(\phi_h,\,\zeta_h,\,\gamma_h)$ be an arbitrary functional
of the state variables, which does not explicitly depend on time
$t$. We proceed as in the analytical case, and find the variation of
this functional,
\begin{equation*}
  \delta F = \int_\M\left(\dfrac{\delta F}{\delta \phi_h}\delta\phi_h
    + \dfrac{\delta F}{\delta \zeta_h}\delta\zeta_h + \dfrac{\delta
      F}{\delta \gamma_h}\delta\gamma_h  \right)d{\mathbf x}. 
\end{equation*}
Dividing both sides by $\delta t$, and noticing that the state
variables $\phi_h$, $\zeta_h$, and $\gamma_h$ each satisfies the
evolution equation \eqref{eq:44a}, one obtains
\begin{equation}
  \label{eq:59}
  \dfrac{d F}{d t} = \int_\M\left(\dfrac{\delta F}{\delta
      \phi_h}\{\phi_h, H\}_h
    + \dfrac{\delta F}{\delta \zeta_h}\{\zeta_h,\,H\}_h +
    \dfrac{\delta F}{\delta \gamma_h}\{\gamma_h,\,H\}_h
  \right)d{\mathbf x}.  
\end{equation}

We now separately examine the terms involved in the above. We first
observe that $\phi_h$ is independent of the other two state variables,
and therefore
\begin{multline*}
  \int_\M \dfrac{\delta F}{\delta
      \phi_h}\{\phi_h, H\}_h d{\mathbf x} = \int_\M \dfrac{\delta F}{\delta
      \phi_h}\{\phi_h, H\}_{h,\phi\zeta\gamma} d{\mathbf x} = \\
    \sum_{i\in\C}
  \left[\dfrac{\delta F}{\delta \phi_h}\right]_i \int_\M
  (-2)\nabla_h\delta_{ih}\cdot\nabla_h\left(\dfrac{\delta H}{\delta
      \gamma_h}\right) d{\mathbf y} |A_i|.  
\end{multline*}
Integration by parts in ${\mathbf y}$, and then switching the summation with
the integration, one has
\begin{equation*}
  \int_\M \dfrac{\delta F}{\delta
      \phi_h}\{\phi_h, H\}_h d{\mathbf x} =  \int_\M
  \nabla_h\cdot\left(\nabla_h\left(\dfrac{\delta H}{\delta
      \gamma_h}\right)  \right) \left(\sum_{i\in\C}
  \left[\dfrac{\delta F}{\delta \phi_h}\right]_i \delta_{ih}({\mathbf y}) |A_i|\right) d{\mathbf y}. 
\end{equation*}
The Kronecker-delta function $\delta_{ih}$ with respect to index $i$
is defined in \eqref{eq:46}. The summation over $i$ yields a discrete
scalar field $\delta F/\delta\phi_h$ with the dependent variable
${\mathbf y}$. Thus, one has
\begin{equation*}
  \int_\M \dfrac{\delta F}{\delta
      \phi_h}\{\phi_h, H\}_h d{\mathbf x} =  \int_\M
  \nabla_h\cdot\left(\nabla_h \left(\dfrac{\delta H}{\delta
      \gamma_h}\right) \right)
  \dfrac{\delta F}{\delta \phi_h}({\mathbf y}) d{\mathbf y}. 
\end{equation*}
Integrating by parts again in ${\mathbf y}$, one finds that
\begin{equation}\label{eq:60}
  \int_\M \dfrac{\delta F}{\delta
      \phi_h}\{\phi_h, H\}_h d{\mathbf x} =  \int_\M -2
  \nabla_h\left( \dfrac{\delta F}{\delta \phi_h}({\mathbf y})\right)
  \cdot\nabla_h \left(\dfrac{\delta H}{\delta
      \gamma_h}\right) d{\mathbf y}. 
\end{equation}

Turning our attention to the second term on the right-hand side of
\eqref{eq:59}, we note that $\zeta_h$ is independent of $\gamma_h$ and
$\phi_h$, and therefore,
\begin{equation}
  \label{eq:61}
  \int_\M \dfrac{\delta F}{\delta \zeta_h}\{\zeta_h,\,H\}_hd{\mathbf x} =
  \int_\M \dfrac{\delta F}{\delta
    \zeta_h} \left(\{\zeta_h,\,H\}_{h,\zeta\zeta} +
  \{\zeta_h,\,H\}_{h,\phi\zeta\gamma}\right) d{\mathbf x}. 
\end{equation}
The treatment of each of the two terms on the right-hand side above
follows the same procedure just presented, involving integration by
parts and switching the integrations and summations. One has, for the
first term above,
\begin{multline}
  \label{eq:62}
    \int_\M \dfrac{\delta F}{\delta \zeta_h}\{\zeta_h,\,H\}_{h,\zeta\zeta} d{\mathbf x}
    = \\
    \int_\M \left(
      \hat q_h\nabla^\perp_h\left(\tilde{\dfrac{\delta
            F}{\delta\zeta_h}}\right) \cdot\nabla_h\left(\dfrac{\delta
          H}{\delta \zeta_h}\right) - \hat
      q_h\nabla^\perp_h\left(\tilde{\dfrac{\delta 
            H}{\delta\zeta_h}}\right) \cdot\nabla_h\left(\dfrac{\delta
          F}{\delta \zeta_h}\right) \right)d{\mathbf x}  \equiv \{F,
    H\}_{h,\zeta\zeta}. 
\end{multline}
For the second term, one has
\begin{equation}
  \label{eq:63}
  \int_\M \dfrac{\delta F}{\delta
    \zeta_h}\{\zeta_h,\,H\}_{h,\phi\zeta\gamma} d{\mathbf x}  = \int_\M (-2) \hat
      q_h\nabla_h\left({\dfrac{\delta F}{\delta\zeta_h}}\right)
      \cdot\nabla_h\left(\dfrac{\delta 
          H}{\delta \gamma_h}\right)d{\mathbf x}.
\end{equation}
Substituting \eqref{eq:62} and \eqref{eq:63} into \eqref{eq:61}, one
therefore has
\begin{equation}
  \label{eq:64}
  \int_\M \dfrac{\delta F}{\delta \zeta_h}\{\zeta_h,\,H\}_hd{\mathbf x} =
  \{F,\,H\}_{h,\zeta\zeta} + \int_\M (-2) \hat
      q_h\nabla_h\left({\dfrac{\delta F}{\delta\zeta_h}}\right)
      \cdot\nabla_h\left(\dfrac{\delta 
          H}{\delta \gamma_h}\right)d{\mathbf x}.
\end{equation}

The variable $\gamma_h$ is independent of $\zeta_h$ and $\phi_h$, and
therefore the third term on the right-hand side of \eqref{eq:59} is
split into two terms,
\begin{equation}
  \label{eq:65}
    \int_\M \dfrac{\delta F}{\delta \gamma_h}\{\gamma_h,\,H\}_h d{\mathbf x} =
    \int_\M \dfrac{\delta F}{\delta\gamma_h}\left(
      \{\gamma_h,\,H\}_{h,\gamma\gamma} +
      \{\gamma_h,\,H\}_{h,\phi\zeta\gamma}\right) d{\mathbf x}.   
\end{equation}
Calculation of the first term on the right-hand side is the most
complex, as it involves non-zero boundary terms. We expand this term
using the specification for the Poisson bracket component
$\{\cdot,\,\cdot\}_{h,\gamma\gamma}$ and by replacing integration in
${\mathbf y}$ with a discrete summation,
\begin{multline}\label{eq:66}
    \int_\M \dfrac{\delta
      F}{\delta\gamma_h}\{\gamma_h,\,H\}_{h,\gamma\gamma}  =\\
    \sum_{i\in\C} |A_i|
  \left[\dfrac{\delta F}{\delta \phi_h}\right]_i\left( \int_\M
  \hat q_h \nabla_h^\perp
  \tilde\delta_{ih}\cdot\nabla_h(-\chi_h)d{\mathbf y}  -  \int_\M \hat q_h \nabla_h^\perp
  \left(-\tilde\chi_h\right) \cdot\nabla_h\delta_{ih}d{\mathbf y} \right).
\end{multline}
As demonstrated in \eqref{eq:53b3} or \eqref{eq:53b1}, integration by
parts on the first integral has different outcomes, depending on
whether $i\in\IC$ 
(interior cells) or $i\in\BC$ (boundary cells). More specifically,
\begin{multline*}
\int_\M
  \hat q_h \nabla_h^\perp
  \tilde\delta_{ih}\cdot\nabla_h(-\chi_h)d{\mathbf y} = \dfrac{1}{2}\int_\M
  \delta_{ih} \tilde{\nabla_h\times\left(\hat q_h\nabla_h\chi_h\right)}
  d{\mathbf y} \\+ \left\{
    \begin{aligned}
      &0 & &\textrm{ if } i\in\IC,\\
      &\dfrac{-1}{4|A_i|}\left(\hat q_{e_1} (\chi_i -\chi_{i_1}) + \hat
  q_{e_2} (\chi_{i_2} - \chi_i)\right) & &\textrm{ if } i\in\BC. 
    \end{aligned}\right.
\end{multline*}
For the boundary case, the geometric relations between $i$, $i_1$,
$i_2$, $e_1$ and $e_2$ are as depicted in Figure \ref{fig:bdry}.

Therefore, the
summation involving the first integral on the right-hand side of
\eqref{eq:66} can be written as
\begin{multline}\label{eq:67}
      \sum_{i\in\C} |A_i|
  \left[\dfrac{\delta F}{\delta \phi_h}\right]_i \int_\M
 \hat q_h \nabla_h^\perp
 \tilde\delta_{ih}\cdot\nabla_h(-\chi_h)d{\mathbf y} = \\
 \sum_{i\in\C} |A_i|
  \left[\dfrac{\delta F}{\delta \phi_h}\right]_i \int_\M\dfrac{1}{2}
    \delta_{ih}\tilde{\nabla_h\times \left( \hat q_h
        \nabla_h\chi_h\right)} d{\mathbf y} -{} \\
    \sum_{i\in\BC}
  \left[\dfrac{\delta F}{\delta\gamma_h}\right]_i \dfrac{\hat q_{e_1} (\chi_i -\chi_{i_1}) + \hat
  q_{e_2} (\chi_{i_2} - \chi_i)}{4|A_i|}\cdot|A_i|.
\end{multline}
 One switches the summation
and the integration on the first term on the right-hand side, and then
applies the adjoint identity to obtain
\begin{multline}\label{eq:68}
      \sum_{i\in\C} |A_i|
  \left[\dfrac{\delta F}{\delta \phi_h}\right]_i \int_\M
 \hat q_h \nabla_h^\perp
 \tilde\delta_{ih}\cdot\nabla_h(-\chi_h)d{\mathbf y} = \\
\int_\M\dfrac{1}{2}\tilde{\left[\dfrac{\delta F}{\delta \phi_h}\right]}
{\nabla_h\times \left( \hat q_h
        \nabla_h\chi_h\right)} d{\mathbf y} -{} \\
    \dfrac{1}{4}\sum_{i\in\BC}
  \left[\dfrac{\delta F}{\delta\gamma_h}\right]_i\left( {\hat q_{e_1}
      (\chi_i -\chi_{i_1}) + \hat 
  q_{e_2} (\chi_{i_2} - \chi_i)}\right).
\end{multline}
For the first term on the right-hand side, one applies the discrete
integration \eqref{eq:vc4}, and obtains
\begin{multline*}
\int_\M\dfrac{1}{2}\tilde{\left[\dfrac{\delta F}{\delta \phi_h}\right]}
{\nabla_h\times \left( \hat q_h
  \nabla_h\chi_h\right)} d{\mathbf y}
= -\int_\M\hat q_h \nabla_h^\perp \left(\tilde{\dfrac{\delta F}{\delta
      \gamma_h}}\right)\cdot \nabla_h \chi_h d{\mathbf y} \\
 { }- \dfrac{1}{2}\sum_{e\in\BE}
    \left[\hat{\dfrac{\delta F}{\delta\gamma_h}}\right]_e \hat q_e
    \left[\nabla_h\chi_h\right]_e  d_e \sum_{\nu\in\VE(e)} t_{e,\nu}.
  \end{multline*}
  The quantity $[\tilde{{\delta F}/{\delta
    \gamma_h}}]_e$ on the edge is computed using the formula
\eqref{eq:ce1}, and the discrete gradient
$[\nabla_h\chi_h]_e$ on the edge is computed using the
formula \eqref{eq:gradc}. After substituting these definitions into
the summation above on the boundary,
one obtains
\begin{multline*}
\int_\M\dfrac{1}{2}\tilde{\left[\dfrac{\delta F}{\delta \phi_h}\right]}
{\nabla_h\times \left( \hat q_h
  \nabla_h\chi_h\right)} dy
= 
-\int_\M\hat q_h \nabla_h^\perp \left(\tilde{\dfrac{\delta F}{\delta
      \gamma_h}}\right)\cdot \nabla_h \chi_h d{\mathbf y} \\
{ }+ \dfrac{1}{4}\sum_{e\in\BE}
     \left(\sum_{i\in\CE(e)} \left[\dfrac{\delta
             F}{\delta\gamma_h}\right]_i\right) \hat q_e 
     \left(\sum_{i\in\CE(e)} \chi_i n_{e,i}  \sum_{\nu\in\VE(e)} t_{e,\nu}\right).
  \end{multline*}
  Switching the two outermost summations, one have
\begin{multline*}
\int_\M\dfrac{1}{2}\tilde{\left[\dfrac{\delta F}{\delta \phi_h}\right]}
{\nabla_h\times \left( \hat q_h
  \nabla_h\chi_h\right)} d{\mathbf y}
= 
-\int_\M\hat q_h \nabla_h^\perp \left(\tilde{\dfrac{\delta F}{\delta
      \gamma_h}}\right)\cdot \nabla_h \chi_h d{\mathbf y} \\
{ }+ \dfrac{1}{4}
     \sum_{i\in\BC} \left[\dfrac{\delta
             F}{\delta\gamma_h}\right]_i \sum_{e\in\BE\cap\EC(i)} \hat q_e 
     \left(\sum_{i\in\CE(e)} \chi_i n_{e,i}  \sum_{\nu\in\VE} t_{e,\nu}\right).
   \end{multline*}
   One notes that the term in the parentheses represent a difference
   formula in the positive orientation, i.e.~counter-clockwise
   direction, on the boundary. Thus, with boundary cells
   $(i_1,\,i,\, i_2)$ ordered in the counter-clockwise direction,
   one can write the terms as
\begin{multline*}
\int_\M\dfrac{1}{2}\tilde{\left[\dfrac{\delta F}{\delta \phi_h}\right]}
{\nabla_h\times \left( \hat q_h
  \nabla_h\chi_h\right)} d{\mathbf y}
= 
-\int_\M\hat q_h \nabla_h^\perp \left(\tilde{\dfrac{\delta F}{\delta
      \gamma_h}}\right)\cdot \nabla_h \chi_h d{\mathbf y} \\
{ }+ \dfrac{1}{4}
     \sum_{i\in\BC}\left[\dfrac{\delta F}{\delta \gamma_h}\right]_i
     \left(\hat q_{e_1}(\chi_i - \chi_{i_1}) + \hat 
       q_{e_2} (\chi_{i_2} - \chi_i)\right).
   \end{multline*}
   One notices that the boundary terms exactly match the boundary terms
   in \eqref{eq:68}. Therefore, substituting the expression on the
   right-hand side into \eqref{eq:68}, one greatly simplifies it into 
\begin{multline}\label{eq:69}
      \sum_{i\in\C} |A_i|
  \left[\dfrac{\delta F}{\delta \phi_h}\right]_i \int_\M
 \hat q_h \nabla_h^\perp
 \tilde\delta_{ih}\cdot\nabla_h(-\chi_h)d{\mathbf y} =
-\int_\M\hat q_h \nabla_h^\perp \left(\tilde{\dfrac{\delta F}{\delta
      \gamma_h}}\right)\cdot \nabla_h \chi_h d{\mathbf y}. 
\end{multline}

The summation on the right-hand side of \eqref{eq:66} can be handled
in the usual way, with no complications from the boundary terms, and
one has
\begin{equation}
  \label{eq:70}
      \sum_{i\in\C} |A_i|
  \left[\dfrac{\delta F}{\delta \phi_h}\right]_i\left( -  \int_\M \hat q_h \nabla_h^\perp
  \left(-\tilde\chi_h\right) \cdot\nabla_h\delta_{ih}d{\mathbf y} \right) =
\int_\M \hat q_h
\nabla_h^\perp\tilde\chi_h\cdot\nabla_h\left(\dfrac{\delta F}{\delta
    \gamma_h}\right) d{\mathbf y}.
\end{equation}

With \eqref{eq:69} and \eqref{eq:70}, one therefore concludes from
\eqref{eq:66} that
\begin{equation}
  \label{eq:71}
    \int_\M \dfrac{\delta
      F}{\delta\gamma_h}\{\gamma_h,\,H\}_{h,\gamma\gamma}  = \{F,\, H\}_{h,\gamma\gamma}.
\end{equation}

The treatment of the second term on the right-hand side of
\eqref{eq:65} is straightforward without complications from the
boundary terms. One starts by replacing
$\{\gamma_h,\,H\}_{h,\phi\zeta\gamma}$ by its definition
\eqref{eq:43}, and noticing that $\gamma_h$ is independent of $\phi_h$
and $\zeta_h$, one has
\begin{multline*}
      \int_\M \dfrac{\delta F}{\delta\gamma_h}
      \{\gamma_h,\,H\}_{h,\phi\zeta\gamma} d{\mathbf x} = { }\\
      \sum_{i\in\C} |A_i|
      \left[\dfrac{\delta F}{\delta\gamma_h}\right]_i \int_\M
      \left(2\hat q_h \nabla_h\delta_{ih}\cdot\nabla_h \dfrac{\delta
          H}{\delta\zeta_h} +
        2\nabla\delta_{ih}\nabla_h\cdot\Phi_h\right) d{\mathbf y}. 
    \end{multline*}
    For both integrals, the discrete integration by parts formula
    \eqref{eq:vc3} applies without generating any boundary
    terms. Afterwards, one proceeds by switching the summation and the
    integration, and applying the discrete integration by parts
    formula again, then one has in the end
\begin{equation}\label{eq:72}
      \int_\M \dfrac{\delta F}{\delta\gamma_h}
      \{\gamma_h,\,H\}_{h,\phi\zeta\gamma} d{\mathbf x} = 
       \int_\M
      \left(2\hat q_h\nabla_h \dfrac{\delta F}{\delta\gamma_h}\cdot \nabla_h \dfrac{\delta
          H}{\delta\zeta_h} +
        2 \nabla_h \dfrac{\delta F}{\delta\gamma_h}
        \cdot\nabla_h\dfrac{\delta\Phi_h}{\delta\phi_h} \right) d{\mathbf y}. 
\end{equation}

From \eqref{eq:65}, \eqref{eq:71}, and \eqref{eq:72}, one concludes
that
\begin{multline}
  \label{eq:73}
      \int_\M \dfrac{\delta F}{\delta\gamma_h}
      \{\gamma_h,\,H\}_{h} d{\mathbf x} =
      \{F,\,H\}_{h,\gamma\gamma} + {}\\
       \int_\M
      \left(2\hat q_h\nabla_h \dfrac{\delta F}{\delta\gamma_h}\cdot \nabla_h \dfrac{\delta
          H}{\delta\zeta_h} +
        2 \nabla_h \dfrac{\delta F}{\delta\gamma_h}
        \cdot\nabla_h\dfrac{\delta H}{\delta\phi_h} \right) d{\mathbf y}. 
\end{multline}
Taking \eqref{eq:60}, \eqref{eq:64}, and \eqref{eq:73} into the
evolutionary equation \eqref{eq:59}, one has
\begin{multline*}
  \dfrac{d F}{d t} = \{F,\,H\}_{h,\zeta\zeta} +
  \{F,\,H\}_{h,\gamma\gamma} + { }\\
  \int_\M
      \left(2\hat q_h\nabla_h \left(\dfrac{\delta F}{\delta\gamma_h}\cdot \nabla_h \dfrac{\delta
          H}{\delta\zeta_h} - \dfrac{\delta H}{\delta\gamma_h}\cdot \nabla_h \dfrac{\delta
          F}{\delta\zeta_h}\right)\right.\\
    { }+
        \left. 2 \left(\nabla_h \dfrac{\delta F}{\delta\gamma_h}
        \cdot\nabla_h\dfrac{\delta H}{\delta\phi_h} - \nabla_h \dfrac{\delta H}{\delta\gamma_h}
        \cdot\nabla_h\dfrac{\delta F}{\delta\phi_h} \right)\right) d{\mathbf y}. 
\end{multline*}
One notes that the integral terms above constitutes the definition of
the component $\{\cdot,\,\cdot\}_{h,\phi\zeta\gamma}$ of the Poisson
bracket, which together with the other two components already present
in the equation, constitutes the definition of the Poisson bracket
itself, and therefore one finally has
\begin{equation}
  \label{eq:74}
  \dfrac{d F}{d t} = \{F,\,H\}_h.
\end{equation}
This equation holds for an arbitrary functional of the discrete
variables $\phi_h$, $\zeta_h$, and $\gamma_h$. 
\end{proof}

The result that the scheme \eqref{eq:58} conserves the total energy,
which is a functional of the discrete state variables, is then a
simple consequence of the skew-symmetry of the discrete Poisson bracket
$\{\cdot,\,\cdot\}_h$.

We now examine the conservation of quantities in the form of
\begin{equation}\label{eq:75}
  C = \int_\M \phi_h G(q_h) d{\mathbf x}. 
\end{equation}

When $G(q_h) = 1$, $C=\int_\M \phi_h d{\mathbf x}$ is the mass. Concerning its
functional derivatives, one finds
\begin{equation*}
  \dfrac{\delta C}{\delta \phi_h} = 1,\qquad
  \dfrac{\delta C}{\delta \zeta_h} = 0,\qquad
  \dfrac{\delta C}{\delta \gamma_h} = 0.
\end{equation*}
Substituting $C$ as $F$ into the Poisson bracket \eqref{eq:44}, one
has
\begin{equation*}
  \{C,\,H\}_h = \int_\M -2 \nabla_h C_{\phi_h}\cdot\nabla_h H_{\gamma_h}
  d{\mathbf x} = 0.
\end{equation*}
Hence, the mass is conserved in the scheme \eqref{eq:58}. 

When $G(q_h) = q_h$, $C = \int_\M \phi_h q_hd{\mathbf x} = \int_\M
(f+\zeta_h)d{\mathbf x}$ is the circulation. It has functional derivatives
\begin{equation*}
  \dfrac{\delta C}{\delta \phi_h} = 0,\qquad
  \dfrac{\delta C}{\delta \zeta_h} = 1,\qquad
  \dfrac{\delta C}{\delta \gamma_h} = 0.
\end{equation*}
Substituting these functional derivatives into the Poisson bracket
\eqref{eq:44}, one has
\begin{equation*}
  \{F,\,H\}_h = \int_\M \hat q_h\left(\nabla_h^\perp
    \tilde{C_{\zeta_h}}\cdot\nabla_h H_{\zeta_h} - 
  \nabla^\perp_h\tilde{H_{\zeta_h}}\cdot\nabla_h C_{\zeta_h} -
  2\nabla_h C_{\zeta_h} \cdot\nabla_h H_{\gamma_h} \right)d{\mathbf x} = 0.
\end{equation*}
Hence the total circulation is also conserved.

When $G(q_h) = q_h^2/2$, $C = \displaystyle \int_\M \phi_h q^2_h /2 d{\mathbf x}$ is the discrete
potential enstrophy. We note that the PV $q_h$ is given by
$q_h = (f+\zeta_h)/\phi_h$. Thus, the discrete potential enstrophy also takes
the form $C = \int_\M \phi_h^{-1} (f+\zeta_h)^2 d{\mathbf x}$. It has functional
derivatives
\begin{equation*}
  \dfrac{\delta C}{\delta \phi_h} = -\dfrac{1}{2}q^2_h,\qquad
  \dfrac{\delta C}{\delta \zeta_h} = q_h,\qquad
  \dfrac{\delta C}{\delta \gamma_h} = 0.
\end{equation*}
Due to the independence of the quantity $C$ from the divergence
$\gamma_h$, its Poisson bracket has only two components that might be
non-zero,
\begin{equation}\label{eq:76}
  \{C,\, H\}_h = \{C,\, H\}_{h,\zeta\zeta} + \{C,\,
  H\}_{h,\phi\zeta\gamma}. 
\end{equation}
Substituting the functional derivatives of $C$ and $H$ into the first
component, one has
\begin{equation*}
  \{C,\,H\}_{h,\zeta\zeta} = \int_\M \hat q_h\left(\nabla^\perp_h
    \tilde q_h \cdot\nabla_h H_{\zeta_h} - \nabla^\perp_h
    \tilde{H_{\zeta_h}} \cdot\nabla_h q_h \right) d{\mathbf x}.
\end{equation*}
With the mapping $\hat{\hphantom{q}}$ defined as in \eqref{eq:ce1}, the second
term in the integral above vanishes, for
\begin{equation*}
\int_\M \hat q_h\nabla^\perp_h
    \tilde{H_{\zeta_h}} \cdot\nabla_h q_h  d{\mathbf x} = \int_\M \nabla^\perp_h
    \tilde{H_{\zeta_h}} \cdot\nabla_h\left(\dfrac{1}{2} q^2_h\right)
    d{\mathbf x} = 0,   
\end{equation*}
by the virtue of Lemma \ref{lem:irrot}. Thus, one has
\begin{equation}\label{eq:77}
  \{C,\,H\}_{h,\zeta\zeta}  =\int_\M \hat q_h\nabla^\perp_h
    \tilde q_h \cdot\nabla_h H_{\zeta_h}.   
\end{equation}
One notes that one can modify the specification \eqref{eq:ce1} of the
mapping 
$\hat{\hphantom{q}}$ to make this remaining term vanishes. But then
the other term will not. There is no known method to make both terms
go away, which is just one sign of the limitations of a discrete
system with a finite number of degrees of freedom. 

Concerning the second term on the right-hand side of \eqref{eq:76},
one has, after substituting in the functional derivatives of $C$,
\begin{equation*}
  \{C,\,H\}_{h,\phi\zeta\gamma} = -2 \int_\M \left(\hat q_h \nabla_h q_h
  \cdot\nabla_h H_{\gamma_h} + \nabla_h\left(-\dfrac{1}{2}
    q^2_h\right) \cdot\nabla_h H_{\gamma_h} \right)d{\mathbf x}. 
\end{equation*}
Again, thanks to the specification \eqref{eq:ce1} of the mapping
$\hat{\hphantom{q}}$, one can bring $\hat q_h$ inside the gradient
operator, and arrives at
\begin{equation}\label{eq:78}
  \{C,\,H\}_{h,\phi\zeta\gamma} = -2 \int_\M \left(\nabla_h
    \left(\dfrac{1}{2}q^2_h\right) 
  \cdot\nabla_h H_{\gamma_h}  - \nabla_h\left(\dfrac{1}{2}
    q^2_h\right) \cdot\nabla_h H_{\gamma_h} \right)d{\mathbf x} = 0. 
\end{equation}
Combining \eqref{eq:77} and \eqref{eq:78} with \eqref{eq:76}, one
concludes that
\begin{equation}\label{eq:79}
  \{C,\, H\}_h =  \int_\M \hat q_h\nabla^\perp_h
    \tilde q_h \cdot\nabla_h H_{\zeta_h},
\end{equation}
which means that the potential enstrophy is not conserved under the scheme
\eqref{eq:58}. 

\subsection{An energy and enstrophy conserving scheme}
As just pointed out, the scheme \eqref{eq:58} fails to conserve the
enstrophy. The Nambu bracket
(\cite{Nambu1973-cm})
expands the Poisson bracket to include the potential enstrophy as a
third argument. Then, the conservation of potential enstrophy is
simply a consequence of the skew-symmetry of the expanded
bracket. 
However,
as already pointed out by Salmon (\cite{Salmon2009-xr}), it is, in
general, 
difficult to incorporate boundary conditions into the Nambu bracket. 

We note that the only reason that the scheme \eqref{eq:58} fails to
conserve 
the potential enstrophy is  that the first component
$\{\cdot,\,\,\cdot\}_{h,\zeta\zeta}$ of the discrete Poisson bracket
does not vanish when the functional $F$ is taken as the discrete
potential enstrophy, unlike in the analytical case. Thus, we propose
to replace this component by a Nambu-style trilinear bracket, containing
the potential enstrophy as a third functional argument, which vanishes
when the functional $F$ is taken as the potential enstrophy. The
issue concerning boundary conditions is not a problem here due to the
fact that the streamfunction $\psi$ vanishes along the
boundary (homogeneous Dirichlet).

We shall first consider the analytical case, and then we deal with its
discretization. 
We let
\begin{equation}\label{eq:79a}
  Z = \int_\M \dfrac{1}{2}\phi q^2 d{\mathbf x}. 
\end{equation}
As we have seen, this quantity has functional derivatives
\begin{equation*}
  \dfrac{\delta Z}{\delta \phi_h} = -\dfrac{1}{2} q^2,\qquad
  \dfrac{\delta Z}{\delta \zeta_h} = q,\qquad
  \dfrac{\delta Z}{\delta \gamma_h} = 0.
\end{equation*}
Using this new variable, the component $\{F,\,H\}_{\zeta\zeta}$ of the
Poisson bracket  can be written as
\begin{equation*}
  \{F,\,H\}_{\zeta\zeta} = \int_\M Z_\zeta\nabla^\perp
  F_\zeta \cdot\nabla H_\zeta d{\mathbf x}.
\end{equation*}
We define the right-hand side as a trilinear bracket,
\begin{equation}
  \label{eq:80}
  \{F,\,H,\,Z\}_{\zeta\zeta\zeta} = \int_\M Z_\zeta\nabla^\perp
  F_\zeta \cdot\nabla H_\zeta d{\mathbf x}.
\end{equation}
Through a simple exercise and thanks to the homogeneous Dirichlet
boundary conditions on the streamfunction $\psi = - H_{\zeta_h}$, one
can show that the trilinear bracket \eqref{eq:80} is
skew-symmetric with respect to any two of the arguments, i.e.,
\begin{equation}
  \label{eq:81}
  \{F,\,H,\,Z\}_{\zeta\zeta\zeta} = -\{Z,\,H,\,F\}_{\zeta\zeta\zeta} =
- \{F,\,Z,\,H\}_{\zeta\zeta\zeta}. 
\end{equation}
That the Poisson bracket component $\{F,\,H\}_{\zeta\zeta}$ vanishes,
when $F$ takes the value of the potential enstrophy $Z$, is a direct
consequence of the skew-symmetry of the trilinear bracket. 
Also due to the skew-symmetry, the trilinear bracket is invariant
under even permutations, i.e.,
\begin{equation}
  \label{eq:82}
  \{F,\,H,\,Z\}_{\zeta\zeta\zeta} = \{Z,\,F,\,H\}_{\zeta\zeta\zeta} =
 \{H,\,Z,\,F\}_{\zeta\zeta\zeta}. 
\end{equation}

One notes that it is also possible to expand the other two components
of the Poisson bracket into trilinear brackets, but it becomes
problematic to establish the skew-symmetry and the invariance under
even permutations for them, due to the complications from the boundary
values. Hence in this work we refrain from adopting the full Nambu
bracket for the shallow water equations.

We now turn to the discretization of the trilinear bracket
\eqref{eq:80}. The goal here is to preserve the skew-symmetry of the
trilinear bracket. This is much harder than the case of the bilinear
bracket, because there are more arguments present at the same
time. However, with the skew-symmetry property of the trilinear bracket
\eqref{eq:80} and its invariance under even permutation, one can
easily derive the relation
\begin{multline}
  \label{eq:83}
  \{F,\,H,\,Z\}_{\zeta\zeta\zeta} =
  \dfrac{1}{6}
  \left(\{F,\,H,\,Z\}_{\zeta\zeta\zeta} + 
    \{Z,\, F,\,H\}_{\zeta\zeta\zeta} + \{H,\,Z,\,F\}_{\zeta\zeta\zeta}\right.\\
    { } -\left.
  \{H,\,F,\,Z\}_{\zeta\zeta\zeta} - \{Z,\,H,\,F\}_{\zeta\zeta\zeta} -
  \{F,\,Z,\,H\}_{\zeta\zeta\zeta} \right). 
\end{multline}
Notes that the right-hand side of \eqref{eq:83} contains all the
possible permutations of $\{F,\,H,\,Z\}_{\zeta\zeta\zeta}$, with the even
ones taking the positive signs and the odd ones taking the negative
signs. This relation is trivial analytically, but numerically, the
expression on the right-hand has 
a crucial advantage compared with the one on the left-hand side,
namely, no matter how each individual term on the  right-hand side is
discretized, the whole expression will remain skew-symmetric, as
long as the permutations are preserved. This observation was
already made by Salmon in
\cite{Salmon2005-pd}. Hence, we will 
construct a discretization of the expression on the right-hand side of
\eqref{eq:83}, as an approximation to the trilinear bracket
\eqref{eq:80} and the Poisson bracket component \eqref{eq:27a}.

The discrete Hamiltonian will remain the same as before, as in
\eqref{eq:35}. The potential enstrophy \eqref{eq:79a} is approximated
by its discrete analogue
\begin{equation}
  \label{eq:84}
  Z = \int_\M \dfrac{1}{2} \phi_h q_h^2 d{\mathbf x}.
\end{equation}
As shown already in the previous section, this quantity has functional
derivatives
\begin{equation*}
  \dfrac{\delta Z}{\delta \phi_h} = -\dfrac{1}{2} q_h^2,\qquad
  \dfrac{\delta Z}{\delta \zeta_h} = q_h,\qquad
  \dfrac{\delta Z}{\delta \gamma_h} = 0.
\end{equation*}

The trilinear bracket $\{F,\,H,\,Z\}_{\zeta\zeta\zeta}$, as a
single component on the right-hand side of \eqref{eq:83}, is
approximated by
\begin{equation*}
  \{F,\,H,\,Z\}_{h,\zeta\zeta\zeta} \equiv 2\int_\M \hat
  Z_{\zeta_h}\nabla^\perp_h \tilde{F_{\zeta_h}}\cdot\nabla_h
  H_{\zeta_h} d{\mathbf x}. 
\end{equation*}
Thus, an discretization of  the trilinear bracket
$\{F,\,H,\,Z\}_{\zeta\zeta\zeta}$ itself, 
as well as the Poisson bracket component $\{F,\,H\}_{\zeta\zeta}$, can
be constructed by permuting the variables, using \eqref{eq:83},
\begin{multline}\label{eq:85}
  \{F,\,H\}_{h,\zeta\zeta} = \{F,\,H,\,Z\}_{h,\zeta\zeta\zeta} = {}\\
  \dfrac{1}{3}\left( \int_\M \hat
  Z_{\zeta_h}\nabla^\perp_h \tilde{F_{\zeta_h}}\cdot\nabla_h
  H_{\zeta_h} d{\mathbf x} + \int_\M \hat
  H_{\zeta_h}\nabla^\perp_h \tilde{Z_{\zeta_h}}\cdot\nabla_h
  F_{\zeta_h} d{\mathbf x} + \right. \\
  \int_\M \hat
  F_{\zeta_h}\nabla^\perp_h \tilde{H_{\zeta_h}}\cdot\nabla_h
  Z_{\zeta_h} d{\mathbf x} 
   - \int_\M \hat
  Z_{\zeta_h}\nabla^\perp_h \tilde{H_{\zeta_h}}\cdot\nabla_h
  F_{\zeta_h} d{\mathbf x} \\
  \left. {} - \int_\M \hat
  H_{\zeta_h}\nabla^\perp_h \tilde{F_{\zeta_h}}\cdot\nabla_h
  Z_{\zeta_h} d{\mathbf x} - \int_\M \hat
  F_{\zeta_h}\nabla^\perp_h \tilde{Z_{\zeta_h}}\cdot\nabla_h
  H_{\zeta_h} d{\mathbf x} \right).
\end{multline}
The key to ensure the skew-symmetry of the discrete trilinear bracket
$\{\cdot,\,\cdot,\, \cdot\}_{h,\zeta\zeta\zeta}$ is the consistency in
the operation $\nabla_h\tilde{(\phantom{F})}$ when applied to different
quantities. This operation is defined in \eqref{eq:sgt}.

This trilinear bracket $\{\cdot,\,\cdot,\,\cdot\}_{h,\zeta\zeta\zeta}$
replaces the bilinear bracket $\{\cdot,\,\cdot\}_{h,\zeta\zeta}$ in
the discrete Poisson bracket, i.e.
\begin{equation}
  \label{eq:89}
  \{F,\, H\}_h = \{F,\,H,\,Z\}_{h,\zeta\zeta\zeta} + \{F,\,
  H\}_{h,\gamma\gamma} + \{F, \,H\}_{h,\phi\zeta\gamma}. 
\end{equation}
The second and third components are as defined in \eqref{eq:42} and
\eqref{eq:43}, respectively. The numerical scheme is again obtained
from the single scalar evolution equation
\begin{equation}
  \label{eq:90}
  \dfrac{\p F}{\p t} = \left\{F, \, H\right\}_h.
\end{equation}

We note that the change from the bilinear bracket
$\{\cdot,\,\cdot\}_{h,\zeta\zeta}$ to the trilinear bracket
$\{\cdot,\,\cdot,\,\cdot\}_{h,\zeta\zeta\zeta}$ 
only affects part of the equation for $\zeta_h$, and the
equations for $\gamma_h$ and $\phi_h$ remain unchanged. We now derive
the new equation for $\zeta_h$. Let us set
\begin{equation*}
  F = \zeta_i.
\end{equation*}
Then, as shown before, we have
\begin{equation*}
  \dfrac{\delta F}{\delta \zeta_h} = \delta_{ih}.
\end{equation*}
Substituting this expression, together with the functional derivatives
of $H$ and $Z$, into \eqref{eq:85}, one obtains
\begin{multline}\label{eq:86}
  \{\zeta_i,\,H\}_{h,\zeta\zeta} = \{\zeta_i,\,H,\,Z\}_{h,\zeta\zeta\zeta} = {}\\
  \dfrac{1}{3}\left( -\int_\M \hat
  q_h \nabla^\perp_h \tilde{\delta_{ih}}\cdot\nabla_h
  \psi_h d{\mathbf x} - \int_\M \hat
  \psi_h\nabla^\perp_h \tilde{q_h}\cdot\nabla_h
  \delta_{ih} d{\mathbf x}  \right. \\
  {}-\int_\M \hat
  \delta_{ih}\nabla^\perp_h \tilde{\psi_h}\cdot\nabla_h
  q_h d{\mathbf x} 
   + \int_\M \hat
  q_h\nabla^\perp_h \tilde{\psi_h}\cdot\nabla_h
  \delta_{ih} d{\mathbf x} \\
  \left. {} + \int_\M \hat
  \psi_h\nabla^\perp_h \tilde{\delta_{ih}}\cdot\nabla_h
  q_h d{\mathbf x} + \int_\M \hat
  \delta_{ih}\nabla^\perp_h \tilde{q_h}\cdot\nabla_h
  \psi_h d{\mathbf x} \right).
\end{multline}
Applying the discrete integration by parts formulas \eqref{eq:vc3} and
\eqref{eq:vc4}, and thanks to the adjoint identities \eqref{eq:vc1}
and \eqref{eq:vc2}, and to the homogeneous Dirichlet boundary
conditions on the discrete streamfunction $\psi_h$, one reaches at
\begin{multline}\label{eq:87}
  \{\zeta_i,\,H,\,Z\}_{h,\zeta\zeta\zeta} = \\
  \dfrac{1}{6}\left[
    \tilde{\nabla_h\times\left(\tilde q_h \nabla_h\psi_h -
        \tilde\psi_h\nabla_h q_h\right)} +
    \nabla_h\cdot\left(\hat\psi_h \nabla_h^\perp \tilde q_h - \hat q_h
      \nabla^\perp_h \tilde\psi_h\right)\right]_i +{}\\
  \dfrac{1}{3}\left[\hat{\nabla^\perp_h \tilde q_h \cdot \nabla_h\psi_h -
    \nabla^\perp_h\tilde\psi_h \cdot\nabla_h q_h}\right]_i. 
\end{multline}

The expression above replaces the term $\{F,\,H\}_{h,\zeta\zeta}$ in
the formulation of the equation of $\zeta_h$. The equations for
$\gamma_h$ and $\phi_h$ remain unchanged. We summarize the full set of
equations, for $\phi_h$, $\zeta_h$ and $\gamma_h$, as follows,
\begin{equation}
  \label{eq:88}
  \left\{
    \begin{aligned}
      \dfrac{d}{dt} \phi_i =
      & -\left[\Delta_h \chi_h\right]_i,\\
      \dfrac{d}{dt}\zeta_i   =
  &   \dfrac{1}{6}\left[
    \tilde{\nabla_h\times\left(\tilde q_h \nabla_h\psi_h -
        \tilde\psi_h\nabla_h q_h\right)} +
    \nabla_h\cdot\left(\hat\psi_h \nabla_h^\perp \tilde q_h - \hat q_h
      \nabla^\perp_h \tilde\psi_h\right)\right]_i +{}\\
  &\dfrac{1}{3}\left[\hat{\nabla^\perp_h \tilde q_h \cdot \nabla_h\psi_h -
    \nabla^\perp_h\tilde\psi_h \cdot\nabla_h q_h}\right]_i 
 -
\left[\nabla_h\cdot\left(\hat 
    q_h\nabla_h\chi_h\right)\right]_i,\\
\dfrac{d}{dt}\gamma_i =
  &\dfrac{1}{2}\left[\tilde{\nabla_h\times\left(\hat q_h 
    \nabla_h\chi_h\right)}\right]_i - \dfrac{1}{2}\left[
    \nabla_h\cdot \left(\hat q_h 
    \nabla^\perp_h\tilde\chi_h\right) \right]_i +
\left[\nabla_h\cdot\left(\hat q_h\nabla_h\psi_h\right)\right]_i\\
&- \left[\Delta_h
  \Phi_h\right]_i - \dfrac{1}{4|A_i|}\left(\hat q_{e_1} (\chi_i
  -\chi_{i_1}) + \hat q_{e_2} (\chi_{i_2} - \chi_i)\right).
    \end{aligned}\right.
\end{equation}
The term preceded by $1/4|A_i|$ in the equation for $\gamma_i$ only
appears for boundary cells ($i\in\BC$). 


A discrete analogue of Theorem \ref{thm:evolv-analytic} or Theorem
\ref{thm:evolv} can also be
established. 
\begin{theorem}\label{thm:evolv1} Let $\phi_h$, $\zeta_h$, and
  $\gamma_h$ be variables that evolve according to equation
  \eqref{eq:90}. Then
  an arbitrary functional $F=F(\phi_h,\,\zeta_h,\,\gamma_h)$  of
  the discrete state variables also evolves according to the equation
  \eqref{eq:90}. 
\end{theorem}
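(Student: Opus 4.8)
The plan is to proceed exactly as in the proof of Theorem~\ref{thm:evolv}, isolating the single place where the new scheme differs. For an arbitrary $F=F(\phi_h,\,\zeta_h,\,\gamma_h)$ I would take its variation, divide by $\delta t$, and invoke the fact that each of $\phi_h$, $\zeta_h$, $\gamma_h$ evolves according to \eqref{eq:90} to write
\[
  \dfrac{dF}{dt} = \int_\M\left(\dfrac{\delta F}{\delta\phi_h}\{\phi_h,\,H\}_h
  + \dfrac{\delta F}{\delta\zeta_h}\{\zeta_h,\,H\}_h
  + \dfrac{\delta F}{\delta\gamma_h}\{\gamma_h,\,H\}_h\right)d{\mathbf x}.
\]
The objective is to reassemble the right-hand side into the discrete bracket $\{F,\,H\}_h$ of \eqref{eq:89}.

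The decisive simplification is that replacing the bilinear component $\{\cdot,\,\cdot\}_{h,\zeta\zeta}$ by the trilinear $\{\cdot,\,\cdot,\,\cdot\}_{h,\zeta\zeta\zeta}$ affects only the evolution of $\zeta_h$. Indeed, every permutation term of \eqref{eq:85} contains the factor $\delta F/\delta\zeta_h$ exactly once, so it vanishes whenever the leading argument is $\phi_i$ or $\gamma_i$, giving $\{\phi_h,\,H,\,Z\}_{h,\zeta\zeta\zeta}=\{\gamma_h,\,H,\,Z\}_{h,\zeta\zeta\zeta}=0$. Consequently $\{\phi_h,\,H\}_h$ and $\{\gamma_h,\,H\}_h$ coincide with their energy-only counterparts, and the first and third terms of the display above reduce verbatim to \eqref{eq:60} and to \eqref{eq:73} (itself built from \eqref{eq:71} and \eqref{eq:72}). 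I would simply import these, so that they already supply $\{F,\,H\}_{h,\gamma\gamma}$ together with their shares of $\{F,\,H\}_{h,\phi\zeta\gamma}$.

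All the genuinely new work sits in the middle term. Writing $\{\zeta_h,\,H\}_h=\{\zeta_h,\,H,\,Z\}_{h,\zeta\zeta\zeta}+\{\zeta_h,\,H\}_{h,\phi\zeta\gamma}$ (the $\gamma\gamma$ piece vanishes because $\delta\zeta_h/\delta\gamma_h=0$), the $\phi\zeta\gamma$ contribution reconstructs precisely as in \eqref{eq:63}. For the trilinear piece I would substitute the explicit cell-centered expression \eqref{eq:87} for $\{\zeta_i,\,H,\,Z\}_{h,\zeta\zeta\zeta}$, multiply by $[\delta F/\delta\zeta_h]_i\,|A_i|$, sum over the cells $i$, and then use the adjoint identities \eqref{eq:vc1}, \eqref{eq:vc2} for the remappings together with the discrete integration-by-parts formulas \eqref{eq:vc3}, \eqref{eq:vc4} to transfer every discrete operator off $\psi_h$ and $q_h$ and onto $\delta F/\delta\zeta_h$. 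The target is to recover all six permutation integrals of \eqref{eq:85}, that is, to establish
\[
  \int_\M\dfrac{\delta F}{\delta\zeta_h}\{\zeta_h,\,H,\,Z\}_{h,\zeta\zeta\zeta}\,d{\mathbf x}
  = \{F,\,H,\,Z\}_{h,\zeta\zeta\zeta} = \{F,\,H\}_{h,\zeta\zeta}.
\]

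The main obstacle is exactly this trilinear reconstruction. In contrast with the bilinear case of \eqref{eq:62}, the three functional derivatives $F_{\zeta_h}$, $H_{\zeta_h}=-\psi_h$ and $Z_{\zeta_h}=q_h$ are now present at once, each entering through a different remapping (to edges via $\hat{\hphantom{q}}$, to vertices via $\tilde{\hphantom{q}}$, or not at all), so matching the reorganized form of \eqref{eq:87} against the six integrals of \eqref{eq:85} is delicate; it rests on the consistency of the operation $\nabla_h\tilde{(\phantom{F})}$ emphasized just after \eqref{eq:85}. The boundary, however, causes no difficulty here: each summand of \eqref{eq:87} carries a factor of $\psi_h$ (or $\tilde\psi_h$, $\hat\psi_h$), and since $\psi_h$ obeys the homogeneous Dirichlet condition, all boundary terms produced by the integrations by parts drop out---unlike the $\gamma\gamma$ computation of Theorem~\ref{thm:evolv}. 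Adding the three reconstructed contributions then assembles \eqref{eq:89} and yields $dF/dt=\{F,\,H\}_h$.
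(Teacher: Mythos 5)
Your proposal is correct and takes essentially the same approach as the paper: the paper's own ``proof'' is only a remark that the argument proceeds as in Theorem~\ref{thm:evolv}, with the new trilinear bracket $\{\cdot,\,\cdot,\,\cdot\}_{h,\zeta\zeta\zeta}$ posing no additional difficulty thanks to the homogeneous Dirichlet boundary conditions on $\psi_h$ --- precisely the two points your argument makes explicit. Your reduction (the trilinear bracket vanishes whenever the leading argument is $\phi_i$ or $\gamma_i$, so the $\phi_h$ and $\gamma_h$ contributions carry over verbatim from Theorem~\ref{thm:evolv} and only the $\zeta_h$ term requires the new reconstruction) is exactly the structure the paper's remark presupposes, worked out in more detail than the paper provides.
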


The proof of this theorem proceeds in a similar way as the proof of
Theorem \ref{thm:evolv}. We simply points that the new trilinear
bracket $\{\cdot,\,\cdot,\,\cdot\}_{h,\zeta\zeta\zeta}$ will not pose
any new difficulty, thanks to the homogeneous Dirichlet boundary
conditions assumed on the discrete streamfunction $\psi_h$.

As a consequence of Theorem \ref{thm:evolv1}, and thanks to the
skew-symmetry property of the discrete Poisson bracket \eqref{eq:89}
(and, indeed, of each of its components), the total energy $H$ is
conserved.

We now examine the conservation of quantities of the form
of \eqref{eq:75}. It is clear that the mass (when $G(q_h) = 1$) is
conserved under this new scheme.
When $G(q_h) = q_h$, $C = \int_\M \phi_h q_hd{\mathbf x} = \int_\M
(f+\zeta_h)d{\mathbf x}$ is the circulation. It has functional derivatives
\begin{equation*}
  \dfrac{\delta C}{\delta \phi_h} = 0,\qquad
  \dfrac{\delta C}{\delta \zeta_h} = 1,\qquad
  \dfrac{\delta C}{\delta \gamma_h} = 0.
\end{equation*}
Substituting these functional derivatives into the trilinear bracket
\eqref{eq:85}, one has
\begin{equation*}
 \{C,\,H,\,Z\}_{h,\zeta\zeta\zeta} = {}\\
  \dfrac{1}{3}\left(
  \int_\M 
  \nabla^\perp_h \tilde{H_{\zeta_h}}\cdot\nabla_h
  Z_{\zeta_h} d{\mathbf x}  - \int_\M 
  \nabla^\perp_h \tilde{Z_{\zeta_h}}\cdot\nabla_h
  H_{\zeta_h} d{\mathbf x} \right)
\end{equation*}
With the aide of the integration by parts formulas \eqref{eq:vc3} and
\eqref{eq:vc4}, and Lemmas \ref{lem:nondivergent} and \ref{lem:irrot},
one can show that both integrals in the above vanish. it has also been
shown in the previous section that both $\{F,\,H\}_{h,\gamma\gamma}$
and $\{F,\,H\}_{h,\phi\zeta\gamma}$ vanish, when $F$ is taken as the
total circulation. Hence the total circulation remains conserved under
the new scheme.

When $G(q_h) = q_h^2/2$, $C = \displaystyle\int_\M \phi_h q^2_h /2 d{\mathbf x}$ is equal to
$Z$, the 
potential enstrophy. It has been shown in the previous section that
both bilinear brackets $\{F,\,H\}_{h,\gamma\gamma}$ and
$\{F,\,H\}_{h,\phi\zeta\gamma}$ vanish when $F$ is taken as the
potential enstrophy. The new trilinear bracket
$\{F,\,H,\,Z\}_{h,\zeta\zeta\zeta}$ also vanishes when $F=Z$, thanks
to its skew-symmetry property. Thus, the potential enstrophy is
conserved under this new scheme.

\section{Linear dispersive analysis}\label{sec:linear-analysis}
Here we consider an infinite domain with a constant Coriolis force
$f_0$, and no bottom topography (i.e.~$b=0$). We also assume that
variations in the streamfunction $\psi_h$ 
and the velocity potential $\chi_h$ are small, and that the variations in
the fluid thickness $\phi_h$ is also small compared to its average
$\bar\phi$. Setting $\hat\phi_h=\bar\phi + \phi'_h$ in the definition
of the geopotential $\Phi_h$ in $\eqref{eq:36}_1$ and dropping the
quadratic terms, one obtains
\begin{equation}
  \label{eq:36-1}
  \Phi_h =  g(\bar\phi+ \phi'_h).
\end{equation}
Setting $\hat\phi_h=\bar\phi + \phi'_h$ in the last two equations of
\eqref{eq:36}, 
and then dropping the quadratic terms which are small, one obtains
\begin{equation}
  \label{eq:36b}\left\{
    \begin{aligned}
 &\bar\phi^{-1} \Delta_h \psi_h = \zeta_h,\\
 &\bar\phi^{-1} \Delta_h \chi_h = \gamma_h.
    \end{aligned}\right.
\end{equation}
Replacing $\phi_h$ by $\bar\phi+\phi'_h$ in \eqref{eq:27-3}, one can
also split the PV $q_h$ into a constant dominant part and a transient
part,
\begin{equation*}
  q_h = \dfrac{f_0}{\bar\phi} + q'_h.
\end{equation*}
Substituting this expression and \eqref{eq:36-1} into \eqref{eq:58},
and again dropping 
the quadratic nonlinear terms, one obtains
\begin{equation}
  \label{eq:58a}
  \left\{
    \begin{aligned}
      \dfrac{d}{dt} \phi'_h =
      & -\left[\Delta_h \chi_h\right]_i,\\
      \dfrac{d}{dt}\zeta_i   =
  & -\dfrac{f_0}{\bar\phi}\Delta_h \chi_h,\\
\dfrac{d}{dt}\gamma_h =
  &
\dfrac{f_0}{\bar\phi}\Delta_h \psi_h - g\Delta\phi'_h.
    \end{aligned}\right.
\end{equation}
Combining \eqref{eq:36b} with \eqref{eq:58a}, one finally obtains the
linearized version of the scheme \eqref{eq:58} as
\begin{equation}
  \label{eq:58b}
  \left\{
    \begin{aligned}
      \dfrac{d}{dt} \phi'_h =
      & -\bar\phi\gamma_h,\\
      \dfrac{d}{dt}\zeta_i   =
  & -f_0 \gamma_h,\\
\dfrac{d}{dt}\gamma_h =
  &f_0 \zeta_h - g\Delta\phi'_h.
    \end{aligned}\right.
\end{equation}
The linearized scheme is identical to the Z-grid scheme of
\cite{Randall1994-vu}, which indicates that the scheme \eqref{eq:58}
possess the same dispersive wave relations as the Z-grid scheme.

Analysis on the enstrophy-conserving scheme \eqref{eq:88} leads to the
same conclusion.

\section{Conclusion}\label{sec:conclusion}
This work develops conservative finite volume schemes, with optimal
dispersive wave relations, for large-scale geophysical flows on
unstructured meshes over bounded or unbounded domains. It uses the
vorticity-divergence formulation of the dynamical equations for the
sake of the dispersive wave relations. The Hamiltonian approach is
followed, which seeks to preserve the skew-symmetric structures within
the system. Two numerical schemes are developed with the first one
conserving the total energy only, and the other slightly more complex
one conserving both the energy and (potential) enstrophy. Both schemes
are also shown to possess the same optimal dispersive wave relations as
those of the Z-grid scheme of \cite{Randall1994-vu}.

For numerical schemes based on the vorticity-divergence formulations,
the issue of boundary conditions is very tricky. For fluid flows, the
classical boundary conditions, e.g.~the no-flux or no-slip conditions,
are typically imposed on the velocity variables, and they have no
counterparts for the vorticity/divergence variables. Artificially
imposed boundary conditions on the vorticity/divergence can lead to
either an under-specified, and over-specified, or an inconsistent
system, with serious stability and accuracy implications. Numerous
efforts have been spent on this issue (\cite{Thom1933-fg,
  Orszag1986-bm, E1996-te}). In this work, the issue of boundary
conditions are addressed at three different levels. At the first
level, the boundary conditions that are mandated by the analytical
system are imposed on the discrete streamfunction and velocity
potential directly; see equation \eqref{eq:36aa}. At the next level,
it is made sure that various 
differential operators are discretized consistently when it comes near
the boundary; see \eqref{eq:gradt} and \eqref{eq:sgt}. Lastly, at the
highest level on the vorticity and 
divergence variables, their values on the boundary are not imposed,
but are simply evolved along with the other prognostic variables, as
dictated by the discrete canonical equations. At this last level, our
approach bears some resemblance to, but also substantial difference from
that of \cite{Bauer2018-uv}, which adds a prognostic equation for the
potential vorticity on the boundary, in addition to the momentum
equations in the interior of the domain.

The current work bears the greatest affinity to that of Eldred and
Randall (\cite{Eldred2017-ji}), and that of Bauer and Cotter
(\cite{Bauer2018-uv}). But there are significant differences as
well. Both our work and \cite{Eldred2017-ji} start from the
vorticity-divergence formulation and the Hamiltonian approach. But
\cite{Eldred2017-ji} uses the Nambu brackets throughout the whole
system to achieve the conservation of enstrophy, and it only considers
the global sphere. The current work uses the Nambu bracket only in the
$\zeta\zeta$-component of the Poisson bracket, and it takes care of
the boundary conditions to ensure that the Poisson/Nambu brackets
remain skew-symmetric. The current work share with \cite{Bauer2018-uv}
the common goal of deriving energy and enstrophy conserving schemes
over bounded domains. The current work differs in the discretization
methodology (FV {\it vs} FEM), the staggering techniques (Z-grid {\it vs}
C-grid), and the 
resulting dispersive wave relations (Z-grid {\it vs} C-grid). 
  
While the schemes developed in this work are shown to possess
excellent conservative properties and dispersive wave relations, other
questions regarding accuracy, dynamical behaviors, etc., are left
untouched. Also left open are the crucial questions on the real
advantages of a doubly-conservative scheme over, say, an
energy-conserving only scheme, and how a doubly-conservative scheme
should be utilized in practice. These questions will be explored in
the second, numerical installment of this project.

\newpage
\appendix
\section{Specifications of the mesh}\label{s:mesh}
\begin{figure}[h]
  \centering
  \includegraphics[width=4.5in]{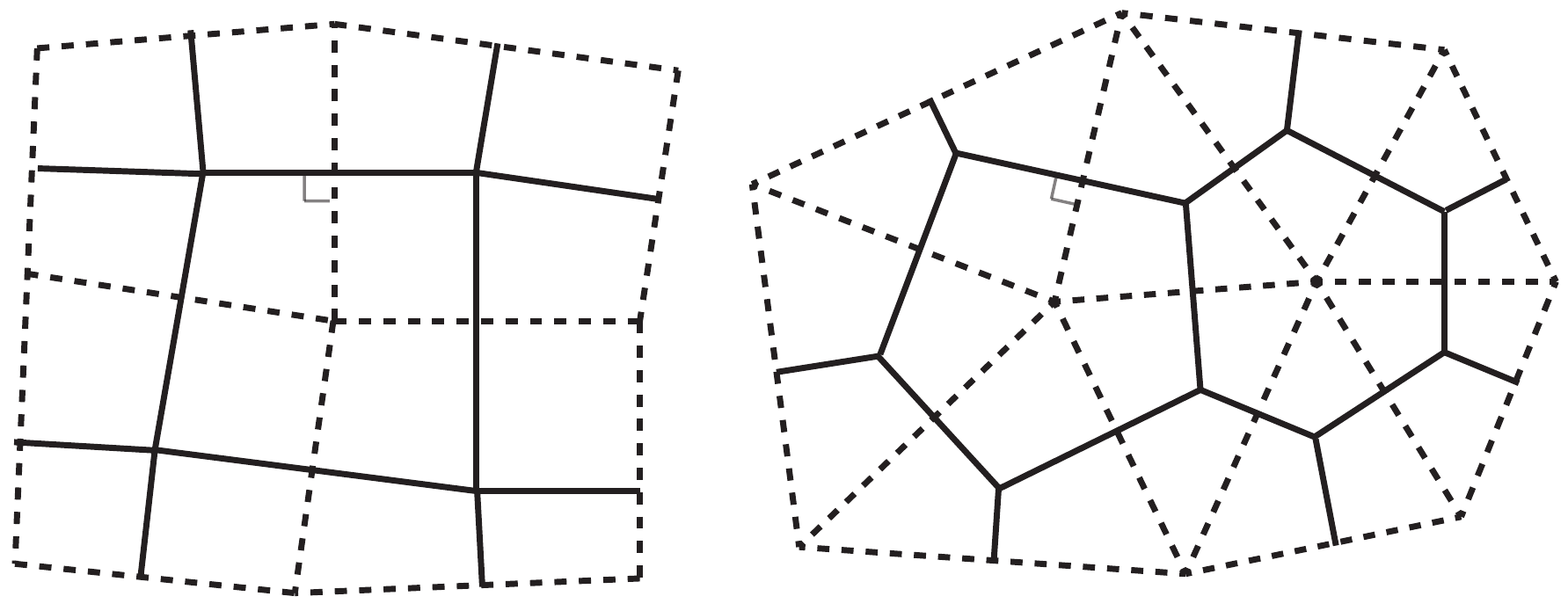}
  \caption{Generic dual meshes, with the domain boundaries passing
    through the primary cell centers. Left: a generic quadrilateral
    dual mesh; right: a generic Delaunay-Voronoi dual mesh.}
  \label{fig:quad-dv}
\end{figure}

\begin{table}[h]
\centering
\ra{1.3}
\caption{Sets of basic mesh elements.}
\vspace{4mm}
\begin{tabular}{@{}ll@{}}\toprule
Set & Definition\\
\midrule
$\IC$ & Set of interior cells\\
$\BC$ & Set of boundary cells\\
$\IE$ & Set of interior edges\\
$\BE$ & Set of boundary edges\\
$\V$ & Set of vertices\\
\bottomrule
\end{tabular}
\label{ta2}
\end{table}

Our approximation of the function space is based on discrete meshes
that consist of polygons. To avoid potential technical issues with the boundary, we
shall assume that the domain $\Omega$ itself is polygonal. We make use
of a pair of staggered meshes, with one called primary and the other called
dual. The meshes consist of polygons, called cells, of arbitrary
shape, but conforming to the requirements to be specified. The centers
of the cells on the primary mesh are the vertices of the cells on the
dual mesh, and vice versa. The edges of the primary cells intersect
{\it orthogonally} with the edges of the dual cells. The line segments
of the boundary $\partial\Omega$ pass through the centers of the
primary cells that border the boundary. Thus the primary cells on the
boundary are only partially contained in the domain. 
{Two examples of this mesh type are shown in Figure
  \ref{fig:quad-dv}.}


\begin{figure}[h]
  \centering
  {\includegraphics[width=3in]{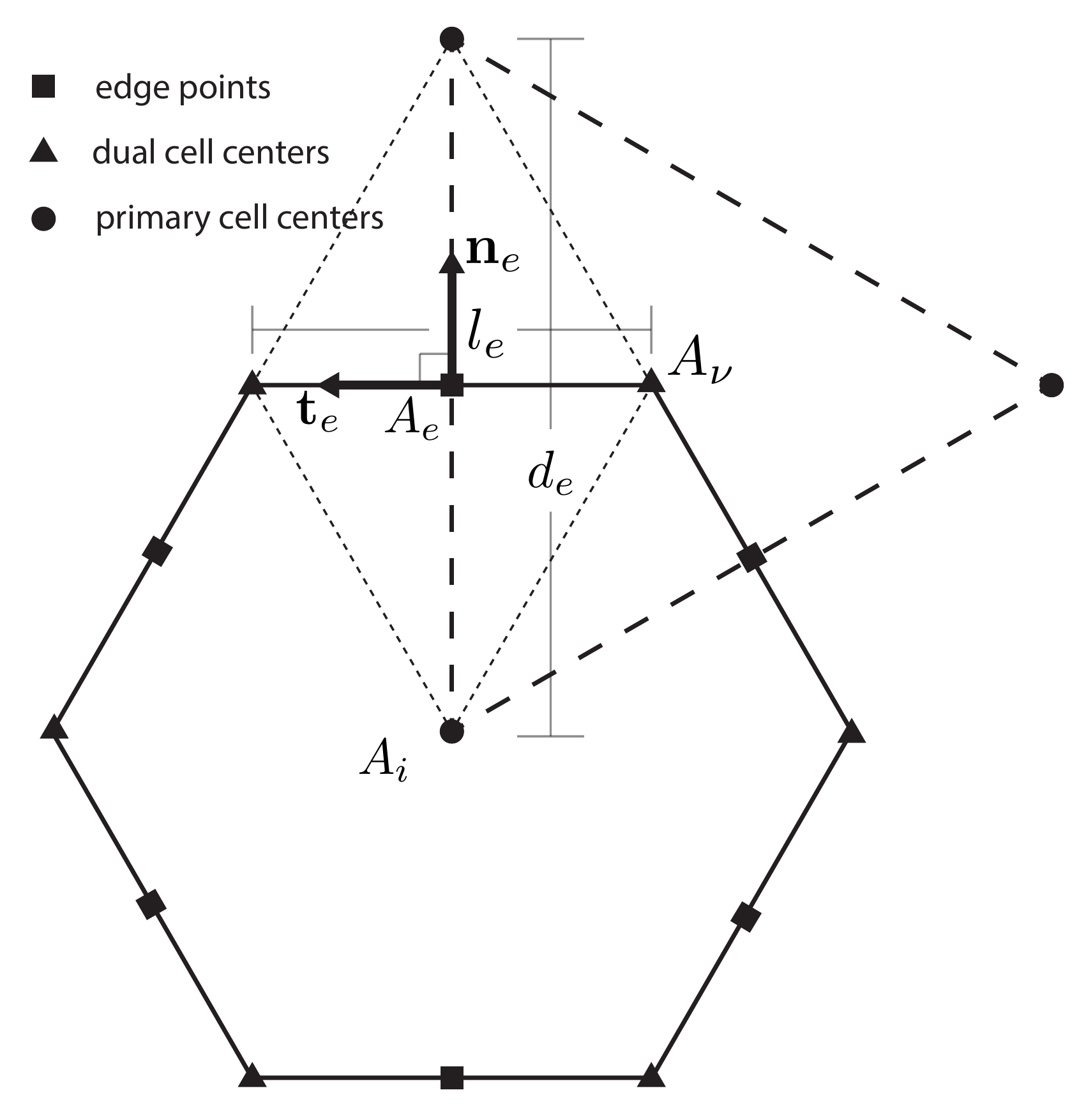}}
  \caption{Illustration of the notations.}
  \label{fig:notations}
\end{figure}

\begin{table}[h]
\centering
\ra{1.3}
\caption{Sets of elements defining the connectivity of a unstructured
  dual grid.}
\vspace{4mm}
\begin{tabular}{@{}ll@{}}\toprule
Set & Definition\\
\midrule
$\EC(i)$ & Set of edges defining the boundary of primary cell $A_i$\\
$\VC(i)$ & Set of dual cells that form the vertices primary cell $A_i$\\
$\CE(e)$ & Set of primary cells boarding edge $e$\\
$\VE(e)$ & Set of dual cells boarding edge $e$\\
$\CV(\nu)$ & Set of primary cells that form vertices of dual cell $D_\nu$\\
$\EV(\nu)$ & Set of edges that define the boundary of dual cell $D_\nu$\\
\bottomrule
\end{tabular}
\label{ta1}
\end{table}

In order to construct function spaces on this type of meshes, some
notations are in order, for which we follow the conventions made in
\cite{Ringler2010-sm,Chen2013-fa}. As shown in the diagram in Figure
\ref{fig:notations}, the primary cells are denoted as $A_i,\, 1\leq
i\leq N_c + N_{cb}$, where $N_c$ denotes the number of cells that are
in the interior of the domain, and $N_{cb}$ the number of cells that
are on the boundary. 
The dual cells, which
all lie inside the domain, are denoted as $A_\nu,\,1\leq \nu\leq
N_v$. 
The area of $A_i$ (resp.~$A_\nu$) is denoted as $|A_i|$
(resp.~$|A_\nu|$). 
Each primary
cell edge corresponds to a distinct dual cell edge, and vice
versa. Thus the primary and dual cell edges share a common index $e,\,
1\leq e\leq N_e+N_{eb}$, where $N_e$ denotes the number of edge pairs
that lie entirely in the interior of the domain, and $N_{eb}$ the
number of edge pairs on the boundary, i.e., with dual cell edge
aligned with the boundary of the domain. 
Upon an edge pair $e$, the distance between the two
primary cell centers, which is also the length of the corresponding
dual cell edge, is denoted as $d_e$, while the distance between the
two dual cell centers, which is also the length of the corresponding
primary cell edge, is denoted as $l_e$. 
These two edges form the diagonals of a diamond-shaped region, whose
vertices consist of the two neighboring primary cell centers and the
two neighboring dual centers. The diamond-shaped region is also
indexed by $e$, and will be referred to as $A_e$.
The Euler formula for planar
graphs states that the number of primary cell centers $N_c + N_{cb}$, the
number of vertices (dual cell centers) $N_v$, and the number of
primary or dual cell edges $N_e + N_{eb}$ must satisfy the relation
\begin{equation}
\label{eq:g44}
  N_c + N_{cb} + N_v = N_e + N_{eb} + 1.
\end{equation}
The connectivity information of the unstructured staggered meshes is
provided by six {\it sets of elements} defined in Table \ref{ta1}. 

For each edge pair, a unit vector $\nb_e$, normal to the primary cell
edge, is specified. A second unit vector $\tb_e$ is defined as
\begin{equation}
\label{eq:g45}
  \tb_e = \kb\times\nb_e,
\end{equation}
with $\kb$ standing for the upward unit vector. Thus $\tb_e$ is
orthogonal to the dual cell edge, but tangent to the primary cell
edge, and points to the vertex on the left side of $\nb_e$. For each
edge $e$ and for each $i\in \CE(e)$ (the set of cells on edge $e$, see
Table \ref{ta1}), we define the direction indicator
\begin{equation}
\label{eq:g46}
  n_{e,i} = \left\{
  \begin{aligned}
    1& & &\phantom{sss}\textrm{if }\nb_e\textrm{ points away from primary cell
    }A_i,\\
    -1& &  &\phantom{sss}\textrm{if }\nb_e\textrm{ points towards primary cell
    }A_i,\\
  \end{aligned}\right.
\end{equation}
and for each $\nu\in \VE(e)$,
\begin{equation}
\label{eq:g47}
  t_{e,\nu} = \left\{
  \begin{aligned}
    1& & &\phantom{sss}\textrm{if }\tb_e\textrm{ points away from dual cell
    }A_\nu,\\
    -1& &  &\phantom{sss}\textrm{if }\tb_e\textrm{ points towards dual cell
    }A_\nu.\\
  \end{aligned}\right.
\end{equation}

For this study, we make the following regularity assumptions on the
meshes. We assume that the diamond-shaped region $A_e$ is actually
convex. In other words, the intersection point of each edge pair falls
inside each of the two edges. We also assume that the meshes are
quasi-uniform, in the sense that there exists $h>0$ such that, for 
each edge $e$,
\begin{equation}
\label{eq:g48}
  mh\leq l_e,\,d_e \leq Mh
\end{equation}
for some fixed constants $(m,\,M)$ that are independent of the
meshes. 
The staggered dual meshes are thus designated by $\mathcal{T}_h$.
{For the convergence analysis, it is assumed in
  \cite{Chen2016-gl} that, for each edge pair $e$, the primary cell
  edge nearly bisect 
  the dual cell edge, and miss by at most $O(h^2)$. This assumption is
  also made here for the error analysis. Generating meshes
  conforming to this requirement on irregular domains, i.e.~domains
  with non-smooth boundaries or domains on surfaces, can be a
  challenge, and will be addressed elsewhere. But we point out that, on
  regular domains with smooth boundaries, this type of meshes can be
  generated with little extra effort in addition to the use of
  standard mesh generators, such as the 
  centroidal Voronoi tessellation algorithm (\cite{Du1999-th,Du2002-lf,Du2003-gn})}.  

\section{Specifications of the discrete averaging and differential
  operators}\label{s:ddo}

In this work, a non-accent symbol, such as $\psi$, usually designates
a discrete scalar field defined at the (primal) cell centers, while
$\widetilde{\phantom{\psi}}$ on the top, as in $\widetilde\psi_h$,
designates a discrete scalar field at 
the vertices, i.e.~dual cell centers, and $\widehat{\phantom{\psi}}$
on the top,
as in $\hat\psi_h$, designates a discrete scalar  field at the edges.

We use $\tilde{[\cdot]}$ to designate the mappings between primal cell
centers and dual cell centers, in both directions. We let $\psi_h$ be
a discrete scalar field defined at cell centers. Then these mappings
can be defined as follows,
\begin{subequations}\label{eq:cv}
\begin{align}
  &\tilde{[\psi_h]} = \sum_{\nu\in\V} \tilde\psi_\nu\chi_\nu, & 
   &\textrm{ with }
  \tilde\psi_\nu = \dfrac{1}{A_\nu} \sum_{i\in CV(\nu)} \psi_i
    A_{i,\nu},\label{eq:cv1}\\ 
  &\tilde{[\tilde\psi_h]} = \sum_{i\in\C} \psi_i\chi_i, &
    &\textrm{ with }
  \psi_i = \dfrac{1}{A_i} \sum_{\nu\in VC(i)} \tilde\psi_\nu
    A_{i,\nu}.\label{eq:cv2}
\end{align}
\end{subequations}
Generally, on unstructured meshes, the composition of these two
mappings is not the identity mapping.

We use $\hat{[\cdot]}$ to designate the mappings between primal cell
centers and cell edges, in both directions. Again, with $\psi_h$ being
a discrete scalar field at cell centers, these mappings
are specified  as follows,
\begin{subequations}\label{eq:ce}
\begin{align}
  &\hat{[\psi_h]} = \sum_{\nu\in\V} \hat\psi_e\chi_e, & &\textrm{
    with }
  \hat\psi_e = \dfrac{1}{1} \sum_{i\in CE(e)} \psi_i,\label{eq:ce1}\\  
  &\hat{[\hat\psi_h]} = \sum_{i\in\C} \psi_i\chi_i, &
    &\textrm{ with }
  \psi_i = \dfrac{1}{2A_i} \sum_{\nu\in VC(i)} \hat\psi_e
    A_{e}.\label{eq:ce2}
\end{align}
\end{subequations}
Generally, on unstructured meshes, the composition of these two
mappings is not the identity mapping.

The
discrete gradient operator $\nabla_h$ on a cell-centered scalar field
$\psi_h$ is defined as 
\begin{equation}
  \nabla_h \psi_h = \sum_{e\in\E} [\nabla_h\psi_h]_e \chi_e
  \nb_e,\qquad \textrm{with}\qquad  
  [\nabla_h\psi_h]_e = \dfrac{-1}{d_e}\sum_{i\in \CE(e)}\psi_i
  n_{e,i}.\label{eq:gradc} 
\end{equation}
Clearly, $[\nabla_h\psi_h]_e$ is an approximation of
$\nabla\psi\cdot\nb$ at cell edge $e$. Similarly, the discrete
gradient operator $\nabla_h$ on a triangle-centered scalar field
$\tilde\psi_h$ is defined as 
\begin{equation}\label{eq:gradt} 
  \nabla_h \tilde\psi_h = \sum_{e\in\E} [\nabla_h\tilde\psi_h]_e \chi_e
  \tb_e,\qquad \textrm{with}\qquad  
  [\nabla_h\tilde\psi_h]_e = \left\{
  \begin{aligned}
  &\dfrac{-1}{l_e}\sum_{\nu\in \VE(e)}\tilde\psi_\nu
  t_{e,\nu}, & &\textrm{ if } e\in\IE,\\
  &\dfrac{t_{e,\nu}(\hat\psi_e - \tilde\psi_\nu)}{l_e}, & &\textrm{ if
  } e\in\BE.
  \end{aligned}\right.
\end{equation}
Clearly, $[\nabla_h\tilde\psi_h]_e$ is an approximation of
$\nabla\psi\cdot\tb$ at cell edge $e$. 

The discrete skew gradient operator $\nabla^\perp_h$ on a
cell-centered scalar field $\psi_h$:
\begin{equation} 
  \nabla_h^{\perp} \psi_h = \sum_{e\in\E}
  [\nabla_h^{\perp}\psi_h]_e \chi_e \tb_e,\qquad\textrm{with}\qquad 
  [\nabla_h^{\perp}\psi_h]_e = \dfrac{-1}{d_e}\sum_{i\in
    \CE(e)}\psi_i n_{e,i}.\label{eq:sg} 
\end{equation}
$[\nabla_h^{\perp}\psi_h]_e$ is a discretization of
$\nabla^\perp\psi\cdot\tb \equiv \p\psi/\p n$ at cell edge e.

The discrete skew gradient operator $\nabla^\perp_h$ on a
triangle-centered scalar field $\tilde\psi_h$:
\begin{equation}\label{eq:sgt} 
  \nabla_h^{\perp} \tilde\psi_h = \sum_{e\in\E}
  [\nabla_h^{\perp}\tilde\psi_h]_e \chi_e \nb_e,
  \end{equation}
  with 
\begin{equation*}
  [\nabla_h^{\perp}\tilde\psi_h]_e =\left\{
  \begin{aligned}
  &\dfrac{1}{l_e}\sum_{\nu\in
    \VE(e)}\tilde\psi_\nu t_{e,\nu}, & &\textrm{ if } e\in\IE,\\
  &\dfrac{t_{e,\nu} (\tilde\psi_\nu - \hat\psi_e)}{l_e}\textrm{ for }
  \nu\in\VE(e), & &\textrm{ if } e\in\BE.
  \end{aligned}\right.
\end{equation*}
where $[\nabla_h^{\perp}\tilde\psi_h]_e$ is a discretization of
$\nabla^\perp\psi\cdot\nb\equiv -\p\psi/\p\tau$ at cell edge e.


We denote by $u_h$ a discrete vector field that is along the normal
direction at each cell edge, and by $v_h$ a discrete vector field that
is along the tangential direction at each cell edge; these discrete
vector fields can be expressed as
\begin{align*}
  u_h &= \sum_{e\in\E} u_e \chi_e \nb_e,\\
  v_h &= \sum_{e\in\E} v_e \chi_e \tb_e.
\end{align*}
On these discrete vector fields various discrete differential
operators can be defined, using the discrete versions of the
divergence theorem or the Green's theorem.

The discrete divergence operator $\nabla_h\cdot(\cdot)$ on a discrete
normal vector field $u_h$:
\begin{equation}
  \label{eq:div}
  \nabla_h\cdot u_h = \sum_{i\in\C}\left[\nabla_h \cdot
    u_h\right]_i\chi_i,\qquad\textrm{with}\qquad
  \left[\nabla_h \cdot u_h\right]_i = \dfrac{1}{A_i}\sum_{e\in \EC(i)}u_e l_e n_{e,i}.
\end{equation}
It is clear that $\nabla_h\cdot u_h$ is a discrete scalar field on the
primal mesh.
It is worth noting that, {\itshape on partial cells on the boundary,
  the 
summation on the right-hand side only includes fluxes across the edges
that are inside the domain and the partial edges that intersect with
the boundary, and this amounts to imposing a no-flux
condition across the boundary.}

The discrete divergence operator $\nabla_h\cdot(\cdot)$ on a discrete
tangential vector field $v_h$:
\begin{equation}
  \label{eq:divt}
  \nabla_h\cdot v_h = \sum_{\nu\in\V}\left[\nabla_h \cdot
    v_h\right]_\nu\chi_\nu,\qquad\textrm{with}\qquad
  \left[\nabla_h \cdot v_h\right]_\nu = \dfrac{1}{A_\nu}\sum_{e\in
    \EV(\nu)}v_e d_e t_{e,\nu}. 
\end{equation}
It is clear that $\nabla_h\cdot u_h$ is a discrete scalar field on the
dual mesh.

The discrete curl operator $\nabla_h\times(\cdot)$ on a discrete
normal vector field:
\begin{equation}
\label{eq:curl}
  \nabla_h\times u_h = \sum_{\nu\in\V}\left[\nabla_h \times
    u_h\right]_\nu\chi_\nu,\quad\textrm{with}\quad
  \left[\nabla_h \times u_h\right]_\nu =
  \dfrac{-1}{A_\nu}\sum_{e\in \EV(\nu)}u_e d_e t_{e,\nu}. 
\end{equation}
Thus, the image of the discrete curl operator
$\nabla_h\times(\,)$ on each $u_h$ is a discrete scalar field on
the dual mesh.

The discrete curl operator $\nabla_h\times(\cdot)$ on a discrete
tangential vector field:
\begin{equation}
\label{eq:curlt}
  \nabla_h\times v_h = \sum_{i\in\C}\left[\nabla_h \times
    v_h\right]_i \chi_i,\quad\textrm{with}\quad
  \left[\nabla_h \times v_h\right]_i =
  \dfrac{1}{A_i}\sum_{e\in \EC(i)}v_e l_e n_{e,i}. 
\end{equation}
Thus, the image of the discrete curl operator
$\nabla_h\times(\,)$ on each $v_h$ is a discrete scalar field on
the primal mesh.
It is worth noting that, {\itshape on partial cells on the boundary,
  the 
summation on the right-hand side only includes currents along the edges
that are inside the domain and the partial edges that intersect with
the boundary, and this amounts to imposing a no-circulation/no-slip 
condition along the boundary.}

For a scalar field $\phi_h$ defined at cell centers, the discrete
Laplacian operator $\Delta_h$ can also be defined,
\begin{equation}
  \label{eq:d30}
  \Delta_h \phi_h := \nabla_h\cdot\left(\nabla_h\phi_h\right) \equiv
  \nabla_h \times\left(\nabla_h^\perp \phi_h\right).
\end{equation}

\section{Discrete vector calculus}\label{sec:dvc}
\begin{lemma}\label{lem:CEV-adj}
  \begin{align}
    &\left(\hat \psi_h,\,\hat \varphi_h\right) =
    \left(\psi_h,\,\hat{\hat\varphi}_h\right),\label{eq:vc1}\\ 
    &\left(\tilde \psi_h,\,\tilde \varphi_h\right) =
    \left(\psi_h,\,\tilde{\tilde\varphi}_h\right).\label{eq:vc2} 
  \end{align}
\end{lemma}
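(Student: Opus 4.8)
The plan is to read the pairing $(\cdot,\cdot)$ as the $L^2(\M)$ inner product of the piecewise-constant representatives $\sum\psi_i\chi_i$, $\sum\hat\psi_e\chi_e$, $\sum\tilde\psi_\nu\chi_\nu$, so that it collapses to an area-weighted sum on whichever grid the two arguments live: $(\psi_h,\varphi_h)=\sum_{i\in\C}|A_i|\,\psi_i\varphi_i$ on cells, $(\hat\psi_h,\hat\varphi_h)=\sum_{e\in\E}|A_e|\,\hat\psi_e\hat\varphi_e$ on edges, and $(\tilde\psi_h,\tilde\varphi_h)=\sum_{\nu\in\V}|A_\nu|\,\tilde\psi_\nu\tilde\varphi_\nu$ on vertices. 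Viewed this way, both identities assert a single fact: the cell-to-vertex remap \eqref{eq:cv1} and the vertex-to-cell remap \eqref{eq:cv2} (respectively the cell-to-edge and edge-to-cell remaps of \eqref{eq:ce}) are mutually adjoint, i.e.\ each reverse remap is the transpose of the corresponding forward remap with respect to these area-weighted inner products. I would prove this by direct expansion, an interchange of the summation order, and recognition of the reverse remap in the resulting inner sum.

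To establish \eqref{eq:vc2}, I would insert the forward remap $\tilde\psi_\nu=|A_\nu|^{-1}\sum_{i\in\CV(\nu)}\psi_i A_{i,\nu}$ of \eqref{eq:cv1} into the vertex inner product; the factor $|A_\nu|$ cancels, leaving $(\tilde\psi_h,\tilde\varphi_h)=\sum_{\nu\in\V}\sum_{i\in\CV(\nu)}A_{i,\nu}\,\psi_i\,\tilde\varphi_\nu$. The key move is to exchange the double sum using the incidence duality $i\in\CV(\nu)\Leftrightarrow\nu\in\VC(i)$ --- a primary cell center is a corner of dual cell $\nu$ precisely when $\nu$ is a corner of that primary cell --- which rewrites the expression as $\sum_{i\in\C}\psi_i\sum_{\nu\in\VC(i)}A_{i,\nu}\tilde\varphi_\nu$. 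Pulling out $|A_i|$ and comparing with \eqref{eq:cv2} identifies the inner sum as $|A_i|(\tilde{\tilde\varphi})_i$, so the right-hand side is exactly $(\psi_h,\tilde{\tilde\varphi}_h)$, as claimed.

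Identity \eqref{eq:vc1} follows by the same three steps, with the edge index $e$ in place of the vertex index $\nu$ and the incidence duality $i\in\CE(e)\Leftrightarrow e\in\EC(i)$. The forward edge remap \eqref{eq:ce1} assigns to $e$ the average of its two incident cell values, so the role played by the overlap area $A_{i,\nu}$ above is now played by $\tfrac12|A_e|$: the forward averaging coefficient times the edge weight $|A_e|$ coincides with the reverse coefficient $|A_e|/(2|A_i|)$ of \eqref{eq:ce2} times the cell weight $|A_i|$, both equal to $\tfrac12|A_e|$. With this common symmetric weight the summation swap again collapses the edge inner product into $(\psi_h,\hat{\hat\varphi}_h)$.

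The only genuine content, and the step I would handle most carefully, is the matching of the area weights: verifying in each case that the forward-remap coefficient multiplied by the target-grid area equals the reverse-remap coefficient multiplied by the primary-cell area. For \eqref{eq:vc2} this is immediate, since the single geometric overlap area $A_{i,\nu}$ --- the region common to primary cell $i$ and dual cell $\nu$ --- appears verbatim in both \eqref{eq:cv1} and \eqref{eq:cv2}, so its symmetry is built into the definitions. For \eqref{eq:vc1} it rests on the observation that the primary edge bisects the diamond $A_e$ into two equal halves, one in each incident cell, which is exactly the geometric statement behind the averaging in \eqref{eq:ce1} and the $1/(2|A_i|)$ normalization of \eqref{eq:ce2}. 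Once these symmetric weights are secured, no boundary terms or mesh-regularity hypotheses are needed: both identities are purely algebraic consequences of the cell--edge and cell--vertex incidence structure.
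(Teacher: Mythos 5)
Your proposal is correct and follows essentially the same route as the paper's own proof: expand the edge (resp.\ vertex) inner product as an area-weighted sum, substitute the forward remap \eqref{eq:ce1} (resp.\ \eqref{eq:cv1}), swap the order of summation via the incidence duality $i\in\CE(e)\Leftrightarrow e\in\EC(i)$ (resp.\ $i\in\CV(\nu)\Leftrightarrow\nu\in\VC(i)$), and recognize the reverse remap \eqref{eq:ce2} (resp.\ \eqref{eq:cv2}). Your explicit remark that the argument hinges on the symmetric weights ($A_{i,\nu}$, resp.\ $\tfrac12 A_e$) is exactly the mechanism implicit in the paper's computation.
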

\begin{proof}
  We prove the equality \eqref{eq:vc1} first, using the specifications
  \eqref{eq:ce} for the remapping between center centers and cell
  edges, 
  \begin{align*}
    \left(\hat\psi_h, \hat\varphi_h\right)
    =& \sum_e \left(\dfrac{1}{2}\sum_{i\in\CE(e)}\psi_i \right)
       \hat\varphi_e A_e \\
    =& \dfrac{1}{2}\sum_i \psi_i \sum_{e\in\EC(i)} \hat\varphi_e A_e\\
    =& \sum_i\psi_i \left(\dfrac{1}{A_i}\sum_{e\in \EC(i)}
       \hat\varphi_e \dfrac{A_e}{2}\right) A_i\\
    =& \left(\psi_h,\, \hat{\hat\varphi}_h\right).
  \end{align*}

Similarly, for the equality \eqref{eq:vc2}, one uses the
specifications \eqref{eq:cv}, and proceeds,
  \begin{align*}
    \left(\tilde\psi_h,\, \tilde\varphi_h\right)
    =& \sum_\nu \left(\dfrac{1}{A_\nu}\sum_{i\in\CV(\nu)}\psi_i A_{i\nu}\right)
       \tilde\varphi_\nu A_\nu \\
  =& \sum_\nu \left(\sum_{i\in\CV(\nu)}\psi_i A_{i\nu}\right)
       \tilde\varphi_\nu \\
    =& \sum_i \psi_i \sum_{\nu\in\VC(i)} \tilde\varphi_\nu A_{i\nu}\\
    =& \sum_i\psi_i \left(\dfrac{1}{A_i}\sum_{\nu\in \VC(i)}
       \tilde\varphi_\nu A_{i\nu}\right) A_i\\
    =& \left(\psi_h,\, \tilde{\tilde\varphi}_h\right).
  \end{align*}

\end{proof}

\begin{lemma}\label{lem:integ-by-parts}
  Let $u_h$ be a discrete normal vector field, $v_h$ a discrete
  tangential vector field, $\psi_h$ a
  cell-centered discrete scalar field, and $\tilde\psi_h$ a
  triangle-centered discrete scalar field. Then
  the following relations hold,
  \begin{align}
    \label{eq:vc3}
    \left( u_h,\,\nabla_h\psi_h\right)_{0,h}
    &= -\dfrac{1}{2}\left(\nabla_h\cdot u_h,\,\psi_h\right)_{0,h},\\
    \left( u_h,\,\nabla_h^\perp\tilde\psi_h\right)_{0,h}
    &= -\dfrac{1}{2}\left(\nabla_h\times u_h,\,\tilde\psi_h\right)_{0,h}
       - \dfrac{1}{2}\sum_{e\in\BE}\hat\psi_e u_e d_e \sum_{\nu\in
      VE(e)} t_{e,\nu},\label{eq:vc4}\\
    \left( v_h,\,\nabla_h^\perp\psi_h\right)_{0,h}
    &= -\dfrac{1}{2}\left(\nabla_h\times
      v_h,\,\psi_h\right)_{0,h},\label{eq:vc3a}\\ 
    \left( v_h,\,\nabla_h\tilde\psi_h\right)_{0,h}
    &= -\dfrac{1}{2}\left(\nabla_h\cdot v_h,\,\tilde\psi_h\right)_{0,h}
       + \dfrac{1}{2}\sum_{e\in\BE}\hat\psi_e v_e d_e \sum_{\nu\in
      VE(e)} t_{e,\nu}.\label{eq:vc4a}
  \end{align}
\end{lemma}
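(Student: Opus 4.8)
The plan is to prove all four identities by the same mechanism, namely discrete summation by parts: expand both sides edge-by-edge, substitute the definitions of the discrete operators, and then reindex the resulting double sum by exchanging the order of summation. The single geometric fact that makes the bookkeeping work is that $A_e$ is the area of the diamond whose diagonals are the orthogonal edges of length $l_e$ and $d_e$, so $A_e=\tfrac12 l_e d_e$; together with the edge inner product $\left(u_h,w_h\right)_{0,h}=\sum_{e}u_e w_e A_e$ this lets the $d_e$ (or $l_e$) in an operator stencil cancel against the corresponding factor in $A_e$, leaving a clean integer-weighted sum.

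For the two identities without boundary terms, \eqref{eq:vc3} and \eqref{eq:vc3a}, the calculation is entirely routine because $\nabla_h\psi_h$ and $\nabla_h^\perp\psi_h$ use the \emph{same} cell-difference stencil \eqref{eq:gradc}, \eqref{eq:sg} built from $\sum_{i\in\CE(e)}\psi_i n_{e,i}$. For \eqref{eq:vc3} I would write
\begin{align*}
\left(u_h,\nabla_h\psi_h\right)_{0,h}
&=\sum_{e} u_e\Big(\tfrac{-1}{d_e}\sum_{i\in\CE(e)}\psi_i n_{e,i}\Big)\tfrac12 l_e d_e
=-\tfrac12\sum_{i}\psi_i\sum_{e\in\EC(i)} u_e l_e n_{e,i}\\
&=-\tfrac12\sum_i \psi_i\,[\nabla_h\cdot u_h]_i\,A_i
=-\tfrac12\left(\nabla_h\cdot u_h,\psi_h\right)_{0,h},
\end{align*}
and \eqref{eq:vc3a} follows identically, with $v_h$ in place of $u_h$ and the tangential curl \eqref{eq:curlt} replacing the divergence \eqref{eq:div}. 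The crucial point to emphasize is \emph{why no boundary term survives}: reindexing from edges to cells is exact precisely because the cell operators $\nabla_h\cdot$ and $\nabla_h\times$ are defined on partial boundary cells to sum only over the interior and partial edges, which is exactly the no-flux / no-slip convention noted after \eqref{eq:div} and \eqref{eq:curlt}. So the absence of a boundary term is not an accident but is designed into the operators.

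For \eqref{eq:vc4} and \eqref{eq:vc4a} the vertex-field gradients $\nabla_h^\perp\tilde\psi_h$ and $\nabla_h\tilde\psi_h$ carry the \emph{modified} boundary stencils in \eqref{eq:sgt}, \eqref{eq:gradt}, in which the missing exterior dual value is replaced by the edge trace $\hat\psi_e$. The plan is to split the edge sum as $\IE\cup\BE$: the $\tilde\psi_\nu$ contributions from both pieces assemble into a single sum $\tfrac12\sum_e u_e d_e\sum_{\nu\in\VE(e)}\tilde\psi_\nu t_{e,\nu}$, which reindexes from edges to vertices and becomes $-\tfrac12\left(\nabla_h\times u_h,\tilde\psi_h\right)_{0,h}$ via \eqref{eq:curl} (resp. $-\tfrac12\left(\nabla_h\cdot v_h,\tilde\psi_h\right)_{0,h}$ via \eqref{eq:divt}), while the extra $\mp\hat\psi_e$ terms carried only by the boundary stencil remain as $\mp\tfrac12\sum_{e\in\BE}\hat\psi_e\,u_e\,d_e\sum_{\nu\in\VE(e)}t_{e,\nu}$. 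I expect the boundary accounting to be the one genuinely delicate step: one must use that $\VE(e)$ reduces to a single interior dual cell on a boundary edge, track the orientation indicators $t_{e,\nu}$ and $n_{e,i}$ from \eqref{eq:g46}--\eqref{eq:g47} through the reindexing, and verify that the sign emerging from the $-\hat\psi_e$ versus $+\hat\psi_e$ in \eqref{eq:sgt} versus \eqref{eq:gradt} matches the minus sign in \eqref{eq:vc4} and the plus sign in \eqref{eq:vc4a}. Getting these signs and the single-vertex structure right on $\BE$ is the main obstacle; everything else is the same cancellation of $d_e$ against $A_e$ used in the first two identities.
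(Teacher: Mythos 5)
Your proposal is correct and checks out in every detail against the paper's definitions: expanding the edge inner product with the diamond weight $A_e=\tfrac12 l_e d_e$, reindexing the double sums via the incidence duality ($i\in\CE(e)$ iff $e\in\EC(i)$, and $\nu\in\VE(e)$ iff $e\in\EV(\nu)$), and splitting $\E=\IE\cup\BE$ reproduces all four identities with the stated signs --- in particular the $-\hat\psi_e$ in \eqref{eq:sgt} versus the $+\hat\psi_e$ in \eqref{eq:gradt} yields exactly the $-$ and $+$ boundary terms of \eqref{eq:vc4} and \eqref{eq:vc4a}, and the single-vertex structure of $\VE(e)$ on $\BE$ is handled as you anticipated. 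The paper actually states this lemma without proof, but your argument is precisely the technique the paper uses to prove the neighboring Lemma \ref{lem:CEV-adj} (expand the inner product, cancel the metric factors, swap summations), so it is essentially the intended argument.
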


\begin{lemma}\label{lem:nondivergent}
Assume that the domain $\M$ is simply connected, and $u_h$ is a
discrete normal vector field.  Then,
\begin{equation}
  \label{eq:vc5}
\nabla_h\cdot u_h = 0
\end{equation}
if and only if 
  \begin{equation}
    \label{eq:vc6}
    u_h = \nabla_h^\perp\tilde\psi_h,
  \end{equation}
for some   triangle-centered scalar field  $\tilde\psi_h$. 
\end{lemma}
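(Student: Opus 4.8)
The plan is to prove the two implications separately: the forward direction \eqref{eq:vc6}$\,\Rightarrow\,$\eqref{eq:vc5} is a discrete $\nabla_h\cdot\,\nabla_h^\perp=0$ identity, while the reverse is a discrete analogue of the fact that a divergence-free field on a simply connected domain has a stream potential. For the forward direction I would insert the edge values \eqref{eq:sgt} of $\nabla_h^\perp\tilde\psi_h$ into the definition \eqref{eq:div} of $\nabla_h\cdot$, obtaining at each cell $i$ the sum $\frac{1}{A_i}\sum_{e\in\EC(i)}n_{e,i}\sum_{\nu\in\VE(e)}\tilde\psi_\nu\,t_{e,\nu}$. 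Regrouping by vertex, each $\nu\in\VC(i)$ is an endpoint of exactly the two edges of $\EC(i)$ that meet there, and using $\tb_e=\kb\times\nb_e$ together with the sign conventions \eqref{eq:g46} and \eqref{eq:g47} one checks that the two coefficients $n_{e,i}\,t_{e,\nu}$ cancel; hence $\mathrm{range}(\nabla_h^\perp)\subseteq\ker(\nabla_h\cdot)$.

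For the reverse direction, I would read the identity $u_h=\nabla_h^\perp\tilde\psi_h$ edge by edge: by \eqref{eq:sgt}, on an interior edge $e$ with $\VE(e)=\{\nu_1,\nu_2\}$ it demands $u_e\,l_e=\tilde\psi_{\nu_1}-\tilde\psi_{\nu_2}$, so the prescribed numbers $\{u_e l_e\}$ must be the differences of a single vertex potential across the primary edges. Viewing the vertices as nodes and the primary edges as arcs of a graph, I would fix $\tilde\psi$ at one base vertex and define it elsewhere by accumulating $u_e l_e$, signed by $t_{e,\nu}$, along any connecting path. Well-definedness is exactly path independence, i.e. the vanishing of the accumulated sum around every cycle. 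Since $\M$ is simply connected, the cycle space of this planar graph is spanned by the boundaries of its bounded faces, which are precisely the primary cells $A_i$; and the accumulated sum around $\partial A_i$ telescopes, after the same orientation check as above, to $\pm A_i[\nabla_h\cdot u_h]_i$. The hypothesis \eqref{eq:vc5} then forces every face circulation to vanish, so $\tilde\psi_h$ is well defined and, by construction, satisfies $u_h=\nabla_h^\perp\tilde\psi_h$.

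The hard part is the boundary bookkeeping, and it enters both steps through the one issue of reconciling the tangential signs $t_{e,\nu}$ used by $\nabla_h^\perp$ with the normal signs $n_{e,i}$ used by $\nabla_h\cdot$ via $\tb_e=\kb\times\nb_e$. Beyond this, boundary edges $e\in\BE$ enter \eqref{eq:sgt} with the extra edge value $\hat\psi_e$, and the partial cells enter \eqref{eq:div} with the built-in no-flux convention; I would treat $\hat\psi_e$ as pinned by the homogeneous boundary data carried by $\tilde\psi_h$ (so that $\nabla_h^\perp$ has trivial kernel on that space) and verify that the partial cells still close the flux balance, so that the accumulated sums around the boundary faces again reduce to $\pm A_i[\nabla_h\cdot u_h]_i$. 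As an independent cross-check one could replace the explicit construction by a dimension count, matching $\dim\mathrm{range}(\nabla_h^\perp)$ against $\dim\ker(\nabla_h\cdot)$ through Euler's relation \eqref{eq:g44} among $N_c+N_{cb}$, $N_v$, and $N_e+N_{eb}$; but since the $A_i$-weighted total of $\nabla_h\cdot u_h$ vanishes identically, the counts balance only once the removed degree of freedom and the boundary conventions are correctly absorbed, so this route does not avoid the same boundary accounting.
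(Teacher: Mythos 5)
The paper states Lemma \ref{lem:nondivergent} without proof---it is one of the unproved utility results of Appendix \ref{sec:dvc}---so there is no in-paper argument to compare yours against; judged on its own terms, your plan is essentially sound. The forward direction is exactly the right computation: inserting \eqref{eq:sgt} into \eqref{eq:div}, the $l_e$ factors cancel and the sum $\sum_{e\in\EC(i)} n_{e,i}\sum_{\nu\in\VE(e)}\tilde\psi_\nu t_{e,\nu}$ telescopes because the two edges of $\EC(i)$ meeting at a given vertex carry opposite values of $n_{e,i}t_{e,\nu}$ (the discrete counterpart of $\nabla\cdot\nabla^\perp=0$). Your reverse direction---potentials on the vertex/primary-edge graph, path independence, cycle space spanned by face boundaries, face circulation equal to $\pm A_i\left[\nabla_h\cdot u_h\right]_i$---is also the correct route, and simple connectivity enters exactly where you place it.

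Two refinements are needed to make the sketch airtight, both at the boundary and both of the kind you anticipate. First, the literal claim that the bounded faces of the vertex/primary-edge graph ``are precisely the primary cells'' is not quite right: cells $i\in\BC$ are not faces of that graph, since part of their perimeter is the domain boundary and the partial edges $e\in\BE$ dangle (their endpoints on $\p\M$ have degree one). The clean fix is the one you propose: pin $\hat\psi_e$ to a single common constant (zero) on $\BE$, i.e.\ ground the whole boundary and adjoin it as zero-jump arcs; then the boundary cells do become faces, their circulations reduce---by the same telescoping, now using the no-flux convention built into \eqref{eq:div}---to $\pm A_i\left[\nabla_h\cdot u_h\right]_i$, and the hypothesis \eqref{eq:vc5} at cells of $\BC$ is consumed precisely there. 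Second, note that this pinning is not optional but forced by the forward direction: if the values $\hat\psi_e$ on $\BE$ were arbitrary, the telescoping at a boundary cell leaves the residue $\pm\left(\hat\psi_{e_1}-\hat\psi_{e_2}\right)/A_i$, so $\nabla_h\cdot\nabla_h^\perp\tilde\psi_h=0$ on $\BC$ holds only when the boundary edge values are constant along the (single, by simple connectivity) boundary component. In other words, the lemma must be read with the homogeneous Dirichlet convention $\hat\psi_e=0$ on $\BE$ incorporated into \eqref{eq:sgt}, which is how the paper uses it throughout; with that reading, both of your implications go through, and your closing remark that the Euler-formula dimension count cannot bypass this boundary accounting is also correct.
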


\begin{lemma}\label{lem:irrot}
Assume that the domain $\M$ is simply connected, and $u_h$ is a
discrete normal vector field.  Then,
  \begin{equation}
    \label{eq:vc7}
\nabla_h\times u_h = 0
  \end{equation}
 if and only if there
  exists a cell-centered scalar field $\varphi_h$ such that
  \begin{equation}
\label{eq:vc8}
    u_h = \nabla_h\varphi_h.
  \end{equation}
\end{lemma}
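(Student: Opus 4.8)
The plan is to prove the two implications separately, recognizing the statement as the discrete Poincar\'e lemma, dual to Lemma \ref{lem:nondivergent}. I would treat \eqref{eq:vc8} $\Rightarrow$ \eqref{eq:vc7} as a discrete ``curl of a gradient vanishes'' identity, and the converse \eqref{eq:vc7} $\Rightarrow$ \eqref{eq:vc8} as a path-integration (potential-reconstruction) argument that invokes simple connectivity.

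For the easy direction, I would substitute $u_e = [\nabla_h\varphi_h]_e = \frac{-1}{d_e}\sum_{i\in\CE(e)}\varphi_i n_{e,i}$ from \eqref{eq:gradc} into the definition \eqref{eq:curl} of the curl at a vertex $\nu$. The factors $d_e$ cancel, leaving
\[
[\nabla_h\times\nabla_h\varphi_h]_\nu = \frac{1}{A_\nu}\sum_{e\in\EV(\nu)}\Big(\sum_{i\in\CE(e)}\varphi_i n_{e,i}\Big) t_{e,\nu}.
\]
I would then argue that this sum telescopes to zero: the edges $\EV(\nu)$ bounding the dual cell $D_\nu$ form a closed loop, consecutive edges share a primary cell $i\in\CV(\nu)$, and the orientation conventions \eqref{eq:g46}--\eqref{eq:g47} force the two occurrences of each $\varphi_i$ to enter with opposite signs. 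This argument is purely combinatorial and holds at every vertex, interior or adjacent to the boundary, since all dual cells lie inside $\M$.

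The converse is the substantive part. I would construct $\varphi_h$ by discrete path integration: fix a base cell $i_0$, set $\varphi_{i_0}=0$, and for any cell $i$ choose a path $i_0=j_0,j_1,\dots,j_m=i$ in the cell-adjacency graph (consecutive cells sharing a common edge), defining $\varphi_i$ as the accumulated sum of the signed increments across the crossed edges, with signs chosen so that crossing an edge $e$ with $\CE(e)=\{i_+,i_-\}$ and $n_{e,i_+}=1$ enforces $\varphi_{i_-}-\varphi_{i_+}=u_e d_e$. This relation is exactly $u_e=[\nabla_h\varphi_h]_e$, so $u_h=\nabla_h\varphi_h$ holds by construction once $\varphi_h$ is shown to be well defined.

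Well-definedness, i.e.\ path-independence, is the main obstacle, and it is where simple connectivity is essential. I would show that the increment accumulated around any closed loop in the cell-adjacency graph vanishes. The bounded faces of this planar graph are in bijection with the mesh vertices $\nu\in\V$ (the loop of cells $\CV(\nu)$ encircling $\nu$), and the circulation of $u_h$ around the elementary loop about $\nu$ equals, up to sign and the factor $A_\nu$, precisely $[\nabla_h\times u_h]_\nu$, which vanishes under the hypothesis \eqref{eq:vc7}. Since $\M$ is simply connected, its first homology is trivial, so every cycle in the adjacency graph is a sum of these elementary face-boundaries; hence the circulation around every closed loop is zero and $\varphi_h$ is independent of the chosen path. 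The final check that the reconstructed $\varphi_h$ reproduces $u_h$ through \eqref{eq:gradc} is then immediate from the defining increments.
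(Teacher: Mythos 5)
First, a point of comparison: the paper states Lemma \ref{lem:irrot} in Appendix \ref{sec:dvc} \emph{without proof} (of the lemmas collected there, only Lemma \ref{lem:CEV-adj} is proved), so your argument is not an alternative to the paper's route --- it supplies a proof the paper omits. On its merits, your argument is correct and is the natural one, a discrete Poincar\'e lemma. The easy direction is right: substituting \eqref{eq:gradc} into \eqref{eq:curl} cancels the $d_e$ factors, and the resulting sum over $e\in\EV(\nu)$ telescopes around the closed polygon bounding the dual cell $D_\nu$; this works at vertices near $\p\M$ as well, since the paper's mesh assumptions (polygonal boundary passing through primary cell centers, dual cells contained in $\M$) keep that polygon a closed loop of dual edges there too. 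The converse is also sound: your sign convention $\varphi_{i_-}-\varphi_{i_+}=u_ed_e$ for crossing an edge with $n_{e,i_+}=1$ reproduces \eqref{eq:gradc} exactly; the bounded faces of the cell-adjacency graph are precisely the dual cells because these tile $\M$; the circulation around the elementary loop $\CV(\nu)$ equals $\pm A_\nu\left[\nabla_h\times u_h\right]_\nu=0$ by hypothesis; and simple connectivity is invoked at exactly the right spot --- with a hole in $\M$, a cycle encircling it is not a sum of face boundaries and the lemma genuinely fails. The single step you assert rather than verify, and on which \emph{both} directions rest, is the orientation identity: for the two edges $e_1,e_2\in\EV(\nu)$ meeting at a shared cell center $i\in\CV(\nu)$, one has $t_{e_1,\nu}n_{e_1,i}=-t_{e_2,\nu}n_{e_2,i}$. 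This is what makes the telescoping cancel, and what makes your traversal signs agree (up to one global sign per loop) with the $t_{e,\nu}$ appearing in \eqref{eq:curl}. It does hold --- it follows from $\tb_e=\kb\times\nb_e$ and the conventions \eqref{eq:g46}--\eqref{eq:g47}, since in a consistent traversal of $\p D_\nu$ each cell center is entered along one of its two incident dual edges and exited along the other --- but spelling it out in a line or two would make the proof fully self-contained.
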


\bibliographystyle{siamplain}
\bibliography{references}   

\end{document}